\def\Bs{\mathcal{B}}\def\Cs{\mathcal{C}}
\def\Kh{\textit{Kh}}
\def\id{\mathds{1}}\def\Z{\mathds{Z}}\def\F{\mathds{F}}
\newtheorem*{maintheorem}{Main theorem}
\newtheorem*{mainquestion}{Main question}
\newtheorem{theorem}{Theorem}[section]
\newtheorem{obs}[theorem]{Observation}
\newtheorem{prop}[theorem]{Proposition}
\newtheorem{question}[theorem]{Question}
\begin{document}

\title{Plumbing is a natural
operation in Khovanov homology}
\author{Thomas Kindred}
\date{\today}

\begin{abstract}  
Given a connect sum of link diagrams, there is an isomorphism which decomposes {\it unnormalized} Khovanov chain groups for the product in terms of {\it normalized} chain groups for the factors; this isomorphism is straightforward to see on the level of chains.
Similarly, any plumbing $x*y$ of Kauffman states carries an isomorphism of the chain subgroups generated by the enhancements of  $x*y$, $x$, $y$:
  \[
 \Cs_R(x*y)\to
 \left(\Cs_{R,p\to1}(x)\otimes \Cs_{R,p\to1}(y)\right)\oplus\left(\Cs_{R,p\to0}(x)\otimes \Cs_{R,p\to0}(y)\right).
\]
We apply this plumbing of chains 
to prove that every homogeneously adequate state has enhancements $X^\pm$ 
in distinct $j$--gradings whose $\color{gray}A\color{black}$--traces (cf \textsection\ref{S:block}) represent nonzero Khovanov homology classes over $\F_2$, and that this is also true over $\Z$ when all $\color{gray}A\color{black}$--blocks' state surfaces are two--sided.  We construct $X^\pm$ explicitly.  
\end{abstract}

\maketitle


\section{Introduction}\label{S:intro}
Given a link diagram $D\subset S^2$, 
smooth each crossing in one of two ways, 
$\raisebox{-.02in}{\includegraphics[width=.125in]{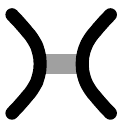}}
\overset{\color{gray}{_{\text{A}}}\color{black}}{\longleftarrow}\raisebox{-.02in}{\includegraphics[width=.125in]{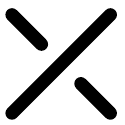}}
\overset{_{\text{B}}}{\longrightarrow}\raisebox{-.02in}{\includegraphics[width=.125in]{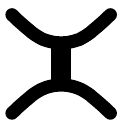}}$. 
The resulting diagram $x$ is called a Kauffman \textbf{state} of $D$ and consists of 
{\it state circles} joined by $\color{gray}A\color{black}$-- and $B$--labeled arcs, one from each crossing.
 \textbf{Enhance} $x$ by assigning each state circle a binary label: 
   $\color{ForestGreen}\bigcirc \color{black} 
\overset{_{\color{ForestGreen}1\color{black}}}{\longleftarrow}
\bigcirc 
\overset{_{\color{NavyBlue}0\color{black}}}{\longrightarrow}
 \color{NavyBlue}\bigcirc\color{black}$, and let $R$ be a ring with 1. The enhanced states from $D$ form an $R$--basis for a bi-graded 
{chain complex} $\Cs_R(D)=\bigoplus_{i,j\in\Z}\Cs_R^{i,j}(D)$, 
which has a differential  $d$ of degree $(1,0)$; 
the resulting homology groups 
are link--invariant. 
Khovanov homology {\it categorifies} the 
Jones polynomial in the sense that the latter
is the graded euler characteristic of the former \cite{jones,khov,viro}. Section \ref{S:viro} reviews Khovanov homology in more detail. 

What do (representatives of) nonzero Khovanov homology classes look like?  The simplest examples come from
 adequate all--$\color{gray}A\color{black}$ states $x_{\color{gray}A\color{black}}$ and adequate all--$B$ states $y_B$: the all--$\color{ForestGreen}1$  \color{black} enhancement of $x_{\color{gray}A\color{black}}$ and the all--\color{NavyBlue}$0$  \color{black} enhancement of $y_B$ are nonzero cycles with any coefficients. Further, any enhancement of $y_B$ with exactly one $\color{ForestGreen}1$\color{black}--label is a nonzero cycle over any $R$ in which 2 is not a unit; and the sum of all enhancements of $x_{\color{gray}A\color{black}}$ with exactly one \color{NavyBlue}0\color{black}--label is a nonzero cycle over $R=\F_2$. 
 
Intriguingly, such states $x_A$, $y_B$ are \textit{essential} in the sense that 
their state surfaces are 
incompressible and $\partial$--incompressible \cite{ozawa}. Does Khovanov homology detect essential surfaces in any more general sense?  Letting  $\Cs_R(x)$ denote the submodule of $\Cs_R(D)$ generated by the enhancements of any state $x$ of $D$, we ask: 

\begin{mainquestion}
For which essential states $x$ 
does $\Cs_R(x)$ contain a 
nonzero homology class?
\end{mainquestion}

As this inquiry depends explicitly on the diagram, the chief motivation is not Khovanov homology in the abstract, but rather a geometric question: in what sense does Khovanov homology detect essential surfaces? 

Which states $x$ are essential? 
A necessary condition is that $x$ must be adequate.  
For a sufficient condition, let ${G_x}$ denote the graph obtained from $x$ by collapsing each state circle to a vertex (each 
crossing arc is then an edge).  Cut ${G_x}$ all at once along its cut vertices (ones whose deletion disconnects $G_x$), and consider the resulting connected components; the corresponding subsets of $x$ are called \textbf{blocks}. 
The state $x$ decomposes under \textbf{plumbing} (of states) into these blocks, and the state surface from $x$ decomposes under plumbing (of surfaces) into the blocks' state surfaces, each of which is a checkerboard surface for its block's underlying link diagram. Section \ref{S:background} reviews state surfaces and plumbing in more detail.

If each block of $x$ is essential, then $x$ is essential too, as plumbing respects essentiality \cite{gab1,ozawa}.   In particular, if each block of $x$ is adequate and either all--$A$ or all--$B$, then $x$ is essential, and is called \textbf{homogeneously adequate} \cite{crom,purcell}.   Our main result states that Khovanov homology over $\F_2=\Z/2\Z$ detects all such states:

\begin{maintheorem}\label{T:main}
If $x$ is a homogeneously adequate state, then $\Cs^{i_x,j_x\pm 1}_{\F_2}(x)$ both contain  (representatives of) nonzero homology classes. If also $G_{x_{\color{gray}A\color{black}}}$ is bipartite, then $\Cs^{i_x,j_x\pm 1}_{\Z}(x)$ 
contain such classes as well.
\end{maintheorem} 
Here, $i_x,j_x$ are integers that depend only on $x$, and $x_{\color{gray}A\color{black}}$ denotes the union of the $\color{gray}A\color{black}$--blocks of $x$. (We will define $x_B$ analogously; this is consistent with the earlier notation $x_{\color{gray}A\color{black}}$, $y_B$.)  The bipartite condition on $G_{x_{\color{gray}A\color{black}}}$ is equivalent to the condition that the state surfaces from the $\color{gray}A\color{black}$--blocks of $x$ are all two--sided. 
 In general, the condition of homogeneous adequacy is sensitive to changes in the link diagram, as are the homology classes from the main theorem, in the sense that Reidemeister moves generally do not preserve the fact that these classes have representatives in some $\Cs_R(x)$.
In the adequate all--$\color{gray}A\color{black}$ case $x=x_{\color{gray}A\color{black}}$, with ${G_x}$ bipartite, the link $L$ can be oriented so that the diagram $D$ is {\it positive}; if this $D$ is a closed braid diagram, then the class from $\Cs_{\Z}^{i_x,j_{x-1}}(x)$ is Plamenevskaya's {\it distinguished element} $\psi(L)$ \cite{plamen}.

Section \ref{S:plumb} develops the operation $*$ of plumbing on Khovanov chains in order to prove the main theorem by induction, extending the all--$\color{gray}A\color{black}$ and all--$B$ cases to the homogeneously adequate case in general. The idea is simple: glue two enhanced states along a state circle where their labels match so as to produce a new enhanced state; then extend linearly. 
Unfortunately, even simplest case of plumbing---connect sum, $\natural$---reveals a technical wrinkle: the differential sometimes changes the labels on the state circle along which the two plumbing factors are glued together, upsetting the compatibility required for the plumbing. The workaround is to specify, by a rule of trumps, whether the labels on the first plumbing factor override those on the second or vice-versa.  The upshot is a useful 
 identity:
\[{d}(X*Y)={d} X \tensor[_\diamondsuit]{*}{}Y+\left(-1\right)^{| \raisebox{-.02in}{\includegraphics[width=.1in]{ASmooth.pdf}}|_x}
 X\tensor[]{*}{_\diamondsuit}{d} Y.\]
 Roughly, this states that plumbing $*$ behaves like an exterior product followed by interior multiplication.  The effect of this workaround is that the inductive proof of the main theorem, although 
hopefully instructive, is somewhat complicated.  Section \ref{S:direct} offers an easier, direct proof.
Section \ref{S:final} gives two easy examples of inessential states $x$ with nonzero $\Cs_R(x)$,
constructs a class of {\it non-homogeneous} essential states $y$, asks whether $\Cs_R(y)$ is nonzero for $y$ in this class, and ends with further open questions.

\underline{Notation:} For a diagram $Z$ of any sort and any feature $\odot$ which may appear in such diagram, $\left|\odot\right|_Z$ denotes the number of $\odot$'s in $Z$. For example, if $D$ is a link diagram, then $\left| \raisebox{-.02in}{\includegraphics[height=.125in]{Crossing.pdf}}\right|_D$ counts the crossings in $D$. 

\section{Khovanov homology of a link diagram, after Viro}\label{S:viro}
\subsection{Enhanced states.}\label{S:enhanced}
Index the crossings of a (connected) link diagram $D$ as $c^1,\hdots, c^{| \raisebox{-.02in}{\includegraphics[width=.1in]{Crossing.pdf}}|_D}$, and
  make a binary choice at each crossing: 
  $\raisebox{-.02in}{\includegraphics[width=.125in]{ASmooth.pdf}}
\overset{\color{gray}{_{\text{A}}}\color{black}}{\longleftarrow}\raisebox{-.02in}{\includegraphics[width=.125in]{Crossing.pdf}}
\overset{_{\text{B}}}{\longrightarrow}\raisebox{-.02in}{\includegraphics[width=.125in]{BSmooth.pdf}}$. The resulting diagram $x\subset S^2$ is called a Kauffman \textbf{state} of ${D}$ and consists of $\left|\bigcirc\right|_x$ {\it state circles} 
 joined by \color{gray}$A-$
 \color{black} and  \color{black} $B-$ \color{black} labeled arcs, one from each crossing. 
 Index the state circles of $x$ as $x_1,\hdots, x_{\left| \bigcirc\right|_x}$, and
 \textbf{enhance} $x$ by making a binary choice at each state circle, $x_r$: 
$\color{ForestGreen}\bigcirc \color{black} 
\overset{_{\color{ForestGreen}a_r=1\color{black}}}{\longleftarrow}
\bigcirc 
\overset{_{\color{NavyBlue}a_r=0\color{black}}}{\longrightarrow}
 \color{NavyBlue}\bigcirc\color{black}$. 
  Letting $R$ be a ring with 1, 
  define $\Cs_R(x)$ to be the $R$--module generated by the enhancements of $x$. Let $V:=R[q]/(q^2)$, and associate $\Cs_R(x)$ with $V^{\otimes\left| \bigcirc\right|_x}$ 
    by identifying each enhancement of $x$ 
 with the simple tensor 
$q^{a_1}\otimes\cdots\otimes q^{a_{\left|\bigcirc\right|_x}}$.
Define: 
\[ \Cs_R(D):=\bigoplus_{\small{\text{states }
 {x}\text{ of }D}}\Cs_R(x)
 =
  \bigoplus_{\small{\text{states }
 {x}\text{ of }D}}V^{\otimes\left| \bigcirc\right|_x}
.\]


\subsection{Grading.}\label{S:grading}
The 
{\it writhe} of an oriented diagram $D$ is ${w_D}=\left| \raisebox{-.02in}{\includegraphics[width=.125in]{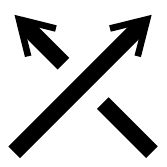}}\right|_D-\left| \raisebox{-.02in}{\includegraphics[width=.125in]{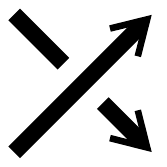}}\right|_D$. 
  For each state $x$ of ${D}$, let 
${\sigma_x}:=\left| \raisebox{-.02in}{\includegraphics[width=.125in]{ASmooth.pdf}}\right|_x-\left| \raisebox{-.02in}{\includegraphics[width=.125in]{BSmooth.pdf}}\right|_x$ and 
$ {{i}_x}:=\frac{1}{2}({w_D}-{\sigma_x}).$
For any enhancement $X$ of $x$, define
${\tau_X}=|\color{ForestGreen}\bigcirc\color{black}|_X-|\color{NavyBlue}\bigcirc\color{black}|_X=-{\left|\bigcirc\right|_x}+2\sum_{r}a_r$ and ${{j}_X}:={w_D}+{{i}_x}-{\tau_X}.$ 
The $R$--module $\Cs_R(D)$ carries a bi-grading $\Cs_R(D)=\bigoplus_{i,j}\Cs^{i,j}_R(D)$, where each $\Cs^{i,j}_R(D)$ is generated by the enhancements $Y$ of states $y$ of $D$ with $i={{i}_y}=:i_Y$ and $j={{j}_Y}$. 

\begin{figure}
\begin{center}
\includegraphics[height=4in]{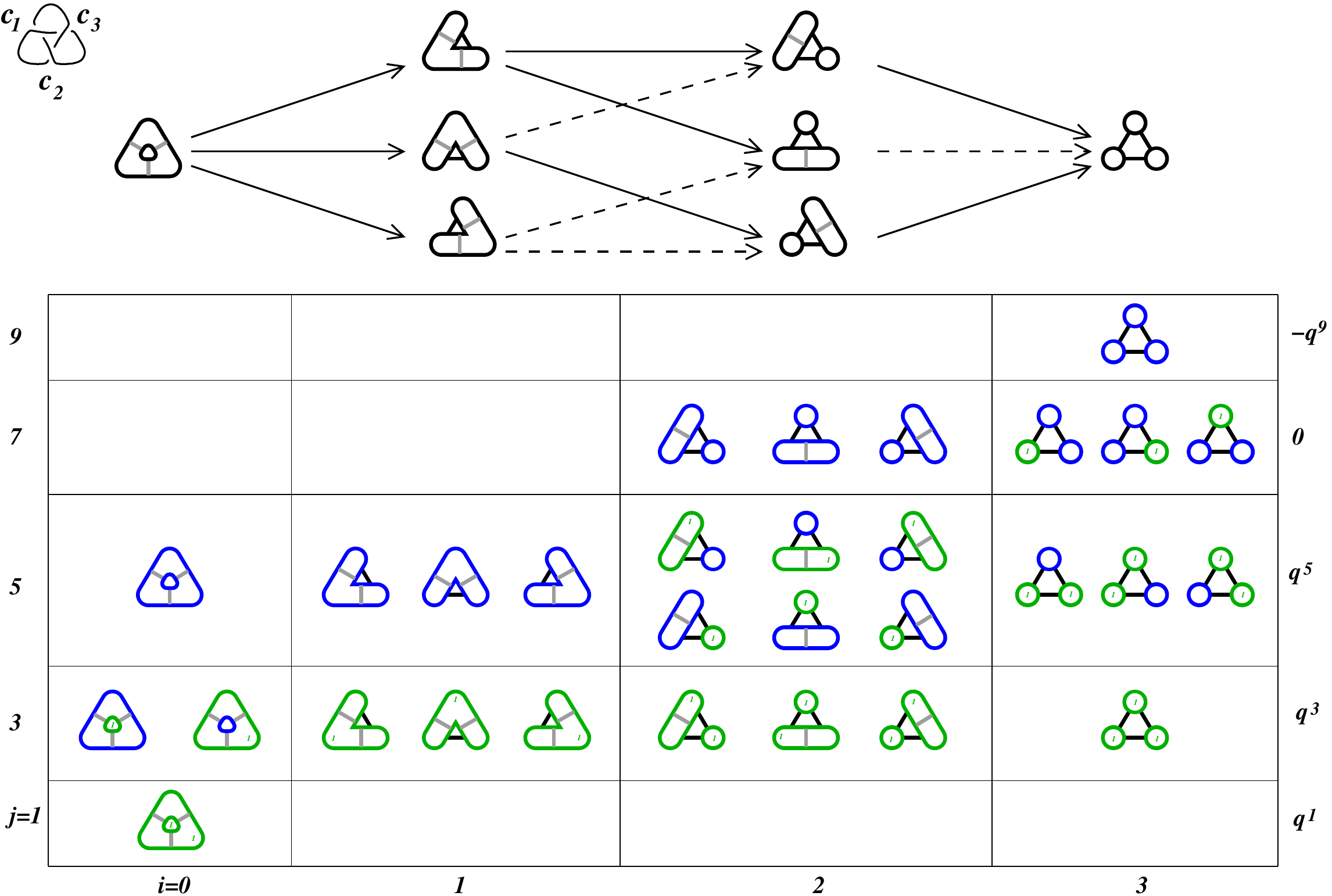}
\caption{The Jones polynomial $V_{K}(q)=q+q^3+q^5-q^9$ of the \textsc{RH} trefoil  via Khovanov chains.}\label{Fi:TrefGridJones}
\end{center}
\end{figure}

The Jones polynomial $V_K(q)$ of an oriented link $K$, unnormalized such that $V_{\text{unknot}}(q)=q+q^{-1}$
, is 
given 
by Kauffman's  state sum formula \cite{jones,kauff}.
Enhancement \textsc{foils} this  formula in order to express the Jones polynomial as the graded euler characteristic of $\Cs_R(D)$ (cf Figure \ref{Fi:TrefGridJones}) \cite{viro}:
\[V_K(q)=q^{3{w_D}}\sum_{\text{states }x}
\left(-1\right)^{{{i}_x}}\left(q+q^{-1}\right)^{\left| \bigcirc\right|_x}=\sum_{\text{enhanced states }X}
\left(-1\right)^{{{i}_X}}q^{{{j}_X}}=\sum_{i,j\in\Z}\left(-1\right)^iq^j\,\text{rk}(\Cs^{i,j}(D)).\]
\subsection{Homology.}\label{S:maps}
With $X$ an enhanced state from a link diagram $D$, define the differential $d_{c^t}X$ of $X$ at each crossing $c^t$ of $D$ by the incidence rules in Figure \ref{Fi:incidence}. (If $x$ has a $B$--smoothing at $c^t$, then ${d}_{c^t}X=0$.) 
In general, the differential of an enhanced state $X\in \Cs^{i,j}_R(D)$ equals the sum ${d} X=\sum_{t}\left(-1\right)^{| \raisebox{-.02in}{\includegraphics[width=.1in]{ASmooth.pdf}}|_{X}^t}{d}_{c^t}Y\in \Cs^{i+1,j}_R(D)$, where $|\raisebox{-.02in}{\includegraphics[height=.125in]{ASmooth.pdf}}|_{X}^t$ is the number of crossings $c^s$ with $s<t$ at which $X$ has an $A-$smoothing. 
When $R=\F_2$, the differential is simply
${d} X=\sum_{t}{d}_{c^t}X.$

Extend $R$--linearly  to obtain the 
\textbf{differential}  ${d}: \Cs_R(D)\to \Cs_R(D)$, which has degree $(1,0)$ and obeys ${d}\circ{d} \equiv 0$, 
giving 
$\Cs_R(D)$  the structure of a \textbf{chain complex}.  A chain $X\in \Cs_R(D)$---ie an $R$--linear combination of enhanced states from $D$---is called \textit{closed} if $dX=0$ and  \textit{exact} if $X=dY$ for some $Y\in \Cs_R(D)$; closed chains are called \textit{cycles}, exact chains \textit{boundaries}. Take {\it cycles mod boundaries} to define Khovanov's \textbf{homology groups} $\Kh_R(D)=\text{ker}({d})/\text{image}({d})$, which are link--invariant.
 
\begin{figure}
\begin{center}
\includegraphics[width=6.5in]{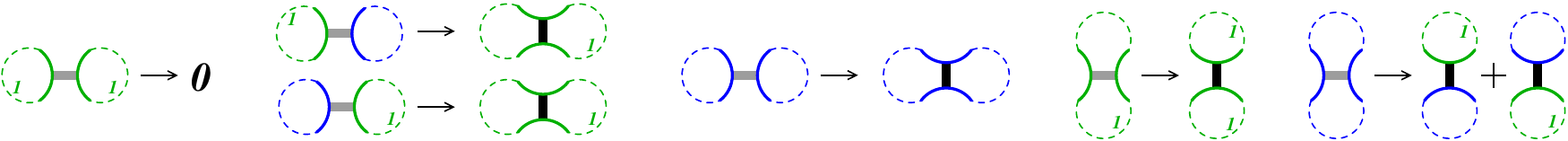}
\caption{Incidence rules for ${d}$: $q\otimes q\mapsto 0$; $1\otimes q,q\otimes1\mapsto q$; $1\otimes 1\mapsto 1$; $q\mapsto q\otimes q$; $1\mapsto q\otimes 1+1\otimes q$.}
\label{Fi:incidence}
\end{center}
\end{figure}

The \textbf{augmentation} map $\varepsilon:\Cs_R(D)\to R$ is the $R$--linear map that sends each enhanced state $X$ to $1$. 
A subset $\Bs\subset \Cs_R(D)$ is called {\it primitive} if, whenever $r\in R$, $X\in \Cs_R(D)$, and $rX\in\Bs$, also $uX\in \Bs$ for some unit $u\in R$.  For example, a collection of enhanced states is primitive. If $\Bs\subset \Cs_R(D)$ is primitive, then the \textbf{projection} map $\pi_{\Bs}:\Cs
_R(D)\to\Cs
_R(D)$ is the $R$--linear map that sends each chain $X$ to itself when $X$ is in the $R$--span of $\mathcal{B}$
and to $0$ otherwise. 
\subsection{Normalization}
\label{S:normal}
%
%
Let $D$ be a link diagram, $R$ a ring with 1, and $p$ a point on $D$ away from crossings. For each state $x$ of $D$, define $\Cs_{R,\color{ForestGreen}p\to1\color{black}}(x)$, $\Cs_{R,\color{NavyBlue}p\to0\color{black}}(x)$ to be the subcomplexes of $\Cs_R(x)$ generated by those enhancements of $x$ in which the state circle containing the point $p$ has the indicated label. 
Note that $\Cs_R(x)=\Cs_{R,\color{ForestGreen}p\to1\color{black}}(x)\oplus \Cs_{R,\color{NavyBlue}p\to0\color{black}}(x)$ and thus
\[\Cs_R(D)=\bigoplus_{\text{states }x\text{ of }D}\Cs_{R,\color{ForestGreen}p\to1\color{black}}(x)\oplus \Cs_{R,\color{NavyBlue}p\to0\color{black}}(x).\]
Define the subcomplexes $\Cs_{R,\color{ForestGreen}p\to1\color{black}}(D):=\bigoplus_{x}\Cs_{R,\color{ForestGreen}p\to1\color{black}}(x)$ and $\Cs_{R,\color{NavyBlue}p\to0\color{black}}(D):=\bigoplus_{x}\Cs_{R,\color{NavyBlue}p\to0\color{black}}(x)$, with a shift of $\pm 1$ in the 
$j$--grading due to omitting the state circle containing the point $p$ from the definitions of $\tau$ and thus of $j$, and with differentials obtained by restricting ${d}$ as follows.  If $X\in \Cs_{R,\color{ForestGreen}p\to1\color{black}}(D)$, $Y\in \Cs_{R,\color{NavyBlue}p\to0\color{black}}(D)$ are enhanced states, then their respective differentials in $\Cs_{R,\color{ForestGreen}p\to1\color{black}}(D)$, $\Cs_{R,\color{NavyBlue}p\to0\color{black}}(D)$ are
\[
\sum_{c^t}\left(-1\right)^{| \raisebox{-.02in}{\includegraphics[width=.1in]{ASmooth.pdf}}|_{X}^t}
\cdot\pi_{\Cs_{R,\color{ForestGreen}p\to1\color{black}}(D)}\circ{d}_{c^t}X,
\hspace{.5in}
\sum_{c^t}\left(-1\right)^{| \raisebox{-.02in}{\includegraphics[width=.1in]{ASmooth.pdf}}|_{Y}^t}
\cdot\pi_{\Cs_{R,\color{NavyBlue}p\to0\color{black}}(D)}\circ{d}_{c^t}Y.\]
In other words, the differentials of $X$ in $\Cs_{R,\color{ForestGreen}p\to1\color{black}}(D)$, $\Cs_{R,\color{NavyBlue}p\to0\color{black}}(D)$ are the same sums of enhanced states as $dX$ in $\Cs_{R}(D)$, subject to the extra condition on the label at $p$.
The graded euler characteristics of the resulting homology groups $\Kh_{R,\color{ForestGreen}p\to1\color{black}}(D)$, $\Kh_{R,\color{NavyBlue}p\to0\color{black}}(D)$ both equal the normalized Jones polynomial, $V_K(D)/\left(q^{-1}+q\right).$
%
\section{Further background}\label{S:background}
%
\subsection{Blocks and zones.}\label{S:block}
 Associate to each state $x$ a \textit{state graph} ${G_x}$ by collapsing each state circle of $x$ to a point; maintain the $\color{gray}A\color{black}$-- and $B$--labels on the edges of ${G_x}$, which come from the crossing arcs in $x$. Cut ${G_x}$ simultaneously along all its cut vertices
 .  The subsets of $x$ corresponding to the resulting (connected) components are called the \textbf{blocks} of $x$. 
  If no block 
contains both $\color{gray}A\color{black}$-- and $B$--type crossing arcs, then $x$ is called 
\textit{homogeneous}  \cite{ozawa,crom}. 
A state $x$ is called \textit{adequate} if each crossing arc joins distinct state circles. If $x$ is both adequate and homogeneous, it is called \textbf{homogeneously adequate}.

%
%
\begin{figure}
\begin{center}
\includegraphics[width=6.5in]{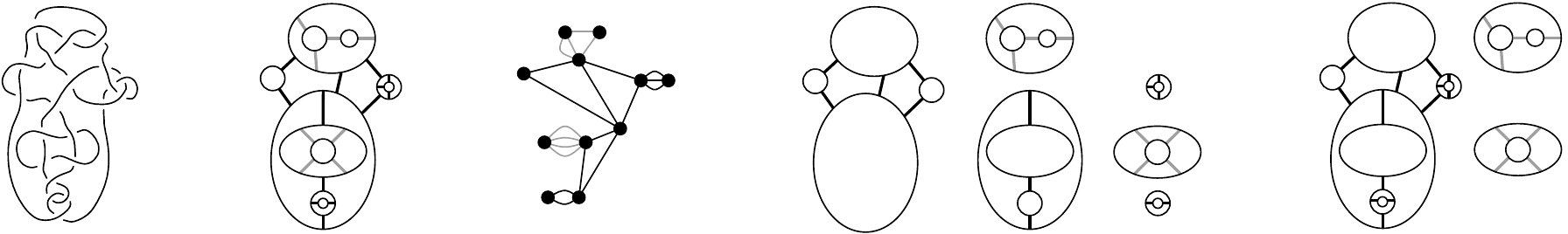}
\caption{From left to right: a link diagram $D$ with a homogeneously adequate state $x$, and its graph ${G_x}$, blocks, and zones.}\label{Fi:blocks}
\end{center}
\end{figure}
%
%

 Given any state $x$, define $x_{\color{gray}A\color{black}}$ to be the union of all $\color{gray}A\color{black}$--type crossing arcs and their incident state circles; define $x_B$ analogously.  If $x$ is homogeneous, $x_{\color{gray}A\color{black}}$ and $x_B$ are the respective unions of the $\color{gray}A\color{black}$-- and $B$--type blocks of $x$. In this case,
define the $\color{gray}A\color{black}$-- and $B$--type homogeneous \textbf{zones} of $x$ to be the components of $x_{\color{gray}A\color{black}}$ and $x_B$, respectively; {call these {\it $\color{gray}A\color{black}$--zones} and {\it $B$--zones} for short}  (cf Figure \ref{Fi:blocks}). 

Define the equivalence relations $\sim_{\color{gray}A\color{black}}$, $\sim_B$ on enhanced states to be
 generated by $
 \raisebox{-.02in}{\includegraphics[width=.125in]{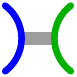}}
 \sim_{\color{gray}A\color{black}}
 \raisebox{-.02in}{\includegraphics[width=.125in]{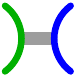}}$ 
 and $
 \raisebox{-.02in}{\includegraphics[width=.125in]{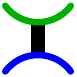}}
 \sim_B
 \raisebox{-.02in}{\includegraphics[width=.125in]{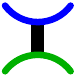}}$, respectively. 
 Let $[X]_{\color{gray}A\color{black}},\left[X\right]_B$ denote the associated equivalence classes. Note that $[X]_{\color{gray}A\color{black}},\left[X\right]_B\subset\Cs_{R}^{i_X,j_X}(x)$. 
%
%
\begin{prop}\label{P:1}
If $X$ enhances a homogeneous state 
$x$, then $[X]_{\color{gray}A\color{black}}\cap [X]_B=\{X\}$. 
\end{prop}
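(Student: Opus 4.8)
The plan is to analyze the two equivalence relations $\sim_{\color{gray}A\color{black}}$ and $\sim_B$ separately in terms of what they can and cannot change about an enhancement, and then observe that homogeneity forces these two ``zones of freedom'' to interact trivially. First I would unpack the generators: the relation $\raisebox{-.02in}{\includegraphics[width=.125in]{KhovAEquiv1.pdf}}\sim_{\color{gray}A\color{black}}\raisebox{-.02in}{\includegraphics[width=.125in]{KhovAEquiv2.pdf}}$ is a local move across a single $\color{gray}A\color{black}$--labeled crossing arc that toggles the labels on the two (distinct, by adequacy of the enhanced state in that zone --- though adequacy is not actually needed here) state circles incident to that arc, subject to the incidence-rule constraint that makes the move reversible; similarly $\sim_B$ toggles labels across a $B$--arc. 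The key bookkeeping observation is that any sequence of $\sim_{\color{gray}A\color{black}}$--moves only ever alters labels on state circles lying in the $\color{gray}A\color{black}$--zones of $x$ (i.e.\ on circles incident to some $\color{gray}A\color{black}$--arc), and leaves every other state circle's label fixed; dually for $\sim_B$.

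Next I would make this precise with an invariant. For each state circle $x_r$, consider whether $x_r$ meets $x_{\color{gray}A\color{black}}$, meets $x_B$, both, or neither. Because $x$ is homogeneous, no block --- hence no zone --- contains both an $\color{gray}A\color{black}$--arc and a $B$--arc; I would want the slightly stronger combinatorial statement that a single state circle cannot be incident to both an $\color{gray}A\color{black}$--arc and a $B$--arc. This follows from the block decomposition: a state circle incident to both would force the corresponding vertex of $G_x$ to lie in a block containing edges of both labels, contradicting homogeneity (after accounting for the cut-vertex surgery --- a circle incident to two differently-labeled arcs yields a vertex whose incident edges of the two types cannot be separated by cutting at cut vertices, since that vertex itself carries both). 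Granting this, the state circles partition into those that can only be touched by $\sim_{\color{gray}A\color{black}}$, those that can only be touched by $\sim_B$, and those touched by neither.

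Then the conclusion is immediate: if $Y\in[X]_{\color{gray}A\color{black}}\cap[X]_B$, then $Y$ is obtained from $X$ by $\sim_{\color{gray}A\color{black}}$--moves, so $Y$ and $X$ agree on every circle outside the $\color{gray}A\color{black}$--zones; but $Y$ is also obtained from $X$ by $\sim_B$--moves, so they agree on every circle outside the $B$--zones; since every circle lies outside at least one of these (no circle is in both an $\color{gray}A\color{black}$--zone and a $B$--zone), $Y$ and $X$ agree on every circle, hence $Y=X$. (The underlying state $x$ is of course unchanged by either relation, so it suffices to compare labels.) The reverse inclusion $\{X\}\subset[X]_{\color{gray}A\color{black}}\cap[X]_B$ is trivial.

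The main obstacle I anticipate is the combinatorial lemma in the second paragraph: verifying carefully that homogeneity --- which is phrased in terms of blocks obtained by cutting $G_x$ at \emph{all} cut vertices simultaneously --- really does preclude a single state circle from being incident to both an $\color{gray}A\color{black}$--arc and a $B$--arc, and equivalently that each state circle belongs to a well-defined zone type (or to none). This is the point where the precise definitions of blocks and zones in \textsection\ref{S:block} must be invoked with care; everything else is just tracking which labels a move is allowed to flip. I would likely isolate this as a short preliminary observation before giving the two-line deduction above.
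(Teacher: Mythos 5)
There is a genuine gap, and it sits exactly where you predicted trouble: the combinatorial lemma in your second paragraph is false. In a homogeneous state a single state circle \emph{can} be incident to both $\color{gray}A\color{black}$-- and $B$--arcs; indeed this is the typical situation for the circles along which the blocks are plumbed, which are the central objects of this paper. Cutting $G_x$ at a cut vertex places a copy of that vertex in \emph{each} of the resulting components, so a vertex carrying edges of both labels is perfectly compatible with homogeneity as long as cutting there separates its $\color{gray}A\color{black}$--edges from its $B$--edges: the simplest example is a connect sum of an adequate all--$\color{gray}A\color{black}$ state with an adequate all--$B$ state (or the state of Figure \ref{Fi:blocks}), where the gluing circle lies in an $\color{gray}A\color{black}$--zone and a $B$--zone simultaneously, i.e.\ $x_{\color{gray}A\color{black}}\cap x_B\neq\varnothing$. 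Consequently your trichotomy of circles (touchable only by $\sim_{\color{gray}A\color{black}}$, only by $\sim_B$, or by neither) omits the circles of $x_{\color{gray}A\color{black}}\cap x_B$, and your two-line deduction only establishes that $X$ and $Y$ agree on $x\setminus(x_{\color{gray}A\color{black}}\cap x_B)$ --- which is the first half of the paper's proof, not all of it.

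What is missing is an argument pinning down the labels of $Y$ on the circles of $x_{\color{gray}A\color{black}}\cap x_B$. The paper does this with a counting argument: each generating move of $\sim_{\color{gray}A\color{black}}$ (resp.\ $\sim_B$) exchanges a \color{ForestGreen}1\color{black}-- and a \color{NavyBlue}0\color{black}--label across a single $\color{gray}A\color{black}$-- (resp.\ $B$--) arc, hence preserves the number of \color{NavyBlue}0\color{black}--labeled circles in each zone; since $X\sim_{\color{gray}A\color{black}}Y$ and $X\sim_BY$, every zone carries the same label counts in $X$ and $Y$, and since $X$, $Y$ already agree on all circles of a zone outside $x_{\color{gray}A\color{black}}\cap x_B$, these counts force agreement on the shared circles, starting with innermost circles of $x_{\color{gray}A\color{black}}\cap x_B$ and inducting on height in the nesting. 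Your proof needs this step (or some substitute for it); as written, the argument does not close, because its key structural claim fails precisely on the plumbing circles.
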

%
%
\begin{proof} Suppose $Y\in [X]_{\color{gray}A\color{black}}\cap [X]_B$ is an enhanced state. Deduce from $X\sim_{\color{gray}A\color{black}}Y$ that $X$, $Y$ are identical in $x\setminus x_{\color{gray}A\color{black}}$, ie  that each state circle of $x_B\setminus x_{\color{gray}A\color{black}}$ has the same 
label in $X$, $Y$. Likewise, $X\sim_BY$ implies that $X$, $Y$ are identical in $x\setminus x_B$. Hence, $X$ and $Y$ are identical in all of $(x\setminus x_{\color{gray}A\color{black}})\cup (x\setminus x_B)=x\setminus (x_{\color{gray}A\color{black}}\cap x_B)$.  
The fact that each 
zone of $x$ has as many \color{NavyBlue}0\color{black}-- (and \color{ForestGreen}1\color{black}--) labeled state circles in $X$ and $Y$ implies that \textit{innermost} circles of $x_{\color{gray}A\color{black}}\cap x_B$ are identical in $X$, $Y$; induction on height in $x_{\color{gray}A\color{black}}\cap x_B$ completes the proof.
\end{proof}
%
%
Define the \textbf{$\color{gray}A\color{black}$--trace} over $R=\F_2$ of any enhanced state $X$ to be $\text{tr}_{\F_2}{X}:=\sum_{Y\sim_{\color{gray}A\color{black}}X}Y$. (The term is chosen in rough analogy with the {\it field trace}; we find no use for an analogous notion of $B$--trace.) To extend this notion to $R=\Z$, suppose $X\sim_{\color{gray}A\color{black}}Y$ are enhanced states; if every non-bipartite $\color{gray}A\color{black}$--zone is all--\color{ForestGreen}1\color{black}, define $\text{sgn}(X\to Y)$ to be $1$ or $-1$ according to whether an even or odd number of $\raisebox{-.02in}{\includegraphics[width=.125in]{KhovAEquiv1.pdf}}
\leftrightarrow \raisebox{-.02in}{\includegraphics[width=.125in]{KhovAEquiv2.pdf}}$ moves
 take $X$ to $Y$. 
Define the {$\color{gray}A\color{black}$--trace} over $\Z$ of such an enhanced state $X$ 
to be  
$\text{tr}_\Z{X}:=\sum_{Y\sim_{\color{gray}A\color{black}}X}\text{sgn}(X\to Y)Y$.
The notion of {$\color{gray}A\color{black}$--trace} is generic to the main question in the following sense:
%
%
\begin{obs}\label{O:generic}
Over $R=\F_2$ (resp. $R=\Z$), every cycle $X\in
\Cs_{R}(x)$ is a sum of traces, $X=\sum_{r}\text{tr}_{R}X_r$,   
and each component of $x_{\color{gray}A\color{black}}$ is adequate and either all--\color{ForestGreen}1 \color{black} or (bipartite) with one \color{NavyBlue}0\color{black}--labeled circle. 
\end{obs}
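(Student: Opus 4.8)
The plan is to work directly with $d$. Although $\Cs_R(x)$ is not a subcomplex of $\Cs_R(D)$, the states obtained from $x$ by changing a single $\color{gray}A\color{black}$--arc to a $B$--arc are pairwise distinct, so the corresponding summands of $\Cs_R(D)$ are independent; hence a chain $X=\sum_k c_kY_k\in\Cs_R(x)$, with the $Y_k$ distinct enhanced states and every $c_k\neq0$, is a cycle if and only if $\sum_k c_k\,{d}_{c^t}Y_k=0$ for each crossing $c^t$ of $x$ carrying an $\color{gray}A\color{black}$--smoothing. I would read off the stated conclusions by analyzing these equations one crossing at a time.

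First, adequacy. Suppose an $\color{gray}A\color{black}$--smoothed crossing $c^t$ of $x$ joins a state circle $x_a$ to itself; then ${d}_{c^t}$ splits $x_a$, acting on its tensor factor by $q\mapsto q\otimes q$ and $1\mapsto q\otimes1+1\otimes q$. For each $Y_k$ with $x_a$ labeled \color{ForestGreen}1\color{black}, the term ${d}_{c^t}Y_k$ has a summand in which both new circles are labeled \color{ForestGreen}1\color{black}, and that summand occurs in no other ${d}_{c^t}Y_{k'}$; so $\sum_k c_k\,{d}_{c^t}Y_k=0$ forces $c_k=0$ for all such $k$. The remaining ${d}_{c^t}Y_k$ are each a sum of two pairwise-distinct enhanced states, so the vanishing forces $c_k=0$ for those $k$ too, and $X=0$. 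Thus a nonzero cycle in $\Cs_R(x)$ forces every component of $x_{\color{gray}A\color{black}}$ to be adequate.

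Now fix an adequate $\color{gray}A\color{black}$--smoothed crossing $c^t$ joining distinct circles $x_a,x_b$ of a component $Z$ of $x_{\color{gray}A\color{black}}$; here ${d}_{c^t}$ merges $x_a,x_b$ by the multiplication $m\colon V\otimes V\to V$, whose kernel is spanned over $R$ by $q\otimes q$ and $q\otimes1-1\otimes q$. Grouping the $Y_k$ according to their labels away from $x_a,x_b$, the equation $\sum_k c_k\,{d}_{c^t}Y_k=0$ unpacks into two families of relations: (i) every $Y_k$ in which $x_a$ and $x_b$ are both labeled \color{NavyBlue}0\color{black} has $c_k=0$; and (ii) if $Y_k$ and $Y_{k'}$ agree except for interchanging a \color{ForestGreen}1\color{black}--label and a \color{NavyBlue}0\color{black}--label across the arc at $c^t$ --- an elementary $\sim_{\color{gray}A\color{black}}$--move --- then $c_{k'}=-c_k$ (over $\F_2$, $c_{k'}=c_k$). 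Running over the crossings of $Z$, (i) says the \color{NavyBlue}0\color{black}--labeled circles of $Y_k$ inside $Z$ form an independent set of $G_Z$, while (ii) determines the $c_k$ along each $\sim_{\color{gray}A\color{black}}$--class from any single one of them, up to a sign given by the parity of the number of elementary moves.

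The crux is to sharpen (i). Suppose a component $Z$ of $x_{\color{gray}A\color{black}}$ carries two \color{NavyBlue}0\color{black}--labeled circles $u,v$ in some $Y_k$. Since (i) already forbids adjacent \color{NavyBlue}0\color{black}--labels, I would slide the \color{NavyBlue}0\color{black}--label at $u$ one edge at a time along a path toward $v$ in the connected graph $G_Z$, using (ii) at each step to follow its coefficient, until two \color{NavyBlue}0\color{black}--labels become adjacent --- which happens no later than when the migrating label reaches a neighbor of $v$. That configuration has coefficient $\pm c_k\neq0$ in $X$ by (ii) but coefficient $0$ by (i): a contradiction. Hence, in every $Y_k$, each component of $x_{\color{gray}A\color{black}}$ is all--\color{ForestGreen}1\color{black} or carries exactly one \color{NavyBlue}0\color{black}--labeled circle. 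Over $\Z$ one argues once more: sliding a lone \color{NavyBlue}0\color{black}--label around an odd cycle of such a component returns it to $Y_k$ with coefficient $-c_k$ by (ii), forcing $2c_k=0$ and so $c_k=0$; hence over $\Z$ every component of $x_{\color{gray}A\color{black}}$ carrying a \color{NavyBlue}0\color{black}--label in $Y_k$ is bipartite. That is exactly the condition --- every non-bipartite component of $x_{\color{gray}A\color{black}}$ being all--\color{ForestGreen}1\color{black} in $Y_k$ --- under which $\mathrm{sgn}$ and $\mathrm{tr}_\Z$ are defined on $Y_k$, and under which the swap-parity sign in (ii) coincides with $\mathrm{sgn}$; so (ii) shows that each $\sim_{\color{gray}A\color{black}}$--class contributes to $X$ a scalar multiple of the $R$--trace of any of its members. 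Therefore $X$ is an $R$--linear combination of traces $\mathrm{tr}_RX_r$ (one from each $\sim_{\color{gray}A\color{black}}$--class appearing in $X$), and, over $\F_2$, literally the sum $\sum_r\mathrm{tr}_{\F_2}X_r$, as claimed. The main obstacle is this sliding argument together with the sign-consistency of (ii) over $\Z$; granting those, relations (i) and (ii) assemble into the stated dichotomy precisely as the definitions of $\sim_{\color{gray}A\color{black}}$, $\mathrm{sgn}$, and $\mathrm{tr}_R$ are designed to record.
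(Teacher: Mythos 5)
Your proof is correct. Note first that the paper offers no argument for this Observation---it is stated without proof---so there is no authorial proof to compare against; judged on its own, your argument is complete. The key steps all check out: since the states obtained from $x$ by switching a single $A$--smoothing to a $B$--smoothing are pairwise distinct, and the sign $(-1)^{|A|^t}$ depends only on $x$ and $t$, the cycle condition really does split crossing by crossing into $\sum_k c_k\,d_{c^t}Y_k=0$; the split map $q\mapsto q\otimes q$, $1\mapsto q\otimes 1+1\otimes q$ is injective with the $q\otimes q$ summand unshared, which forces all coefficients to vanish at any inadequate $A$--crossing and hence gives adequacy for nonzero cycles; for a merge, $\ker m=\langle q\otimes q,\ q\otimes 1-1\otimes q\rangle$ over any coefficient ring yields exactly your relations (i) and (ii); the sliding argument along a path in the connected graph of a component $Z$ of $x_{A}$ correctly rules out two $0$--labeled circles in $Z$, since each intermediate enhancement inherits coefficient $\pm c_k\neq 0$ via (ii) and is therefore itself subject to (i), so the first adjacency of $0$--labels is fatal; and transporting a lone $0$--label around an odd closed walk returns the same enhanced state with coefficient $-c_k$, forcing $2c_k=0$ and hence bipartiteness over $\Z$. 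Your identification of the move-parity sign with $\mathrm{sgn}$, and hence of each $\sim_{A}$--class's contribution with $c_r\,\mathrm{tr}_R X_r$, is also right, with the one caveat you flag yourself: over $\Z$ the honest conclusion is that $X$ is a $\Z$--linear combination of traces rather than literally a sum, which is the reading the paper needs anyway, since later on (e.g.\ in Proposition \ref{P:plumbCycleGen}) only the ``adequate, with at most one $0$--labeled circle per component of $x_{A}$'' consequence is invoked.
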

%
%

\subsection{State surfaces}\label{S:statesurface}

Given a link diagram $D$ on $S^2\subset S^3$, embed the underlying link $L$ in $S^3$ by inserting tiny, disjoint balls $\bigsqcup C^t=C$ at the crossing points $c^t$ and pushing the two  arcs of $D\cap C^t$ to the hemispheres of $\partial C^t\setminus S^2$ indicated by the over--under information at $c^t$.  In this setup, the states of $D$ are precisely the closed 1--manifolds $L\cap S^2\subset x\subset (L\cup \partial C)\cap S^2$. 
Given a state $x$ in this setup,  $x\cup L$ intersects each $\partial C^t$ in a circle.  Cap each such circle with a disk in $C^t$, called a {\it crossing band}, and cap the state circles of $x$ with disks whose interiors are disjoint from one another, all on the same side of $S^2\cup C$. The resulting unoriented surface $F_x$ \textit{spans} $L$, meaning that $\partial F_x=L$, and is called the \textbf{state surface} from $x$.

When $L$ is a knot, its linking number with a co-oriented pushoff $\hat{L}_i$ in $F_x$ is called the \textit{boundary slope} of $F_x$ and equals $2i_x$. 
%
\begin{figure}
\begin{center}
\includegraphics[width=3.5in]{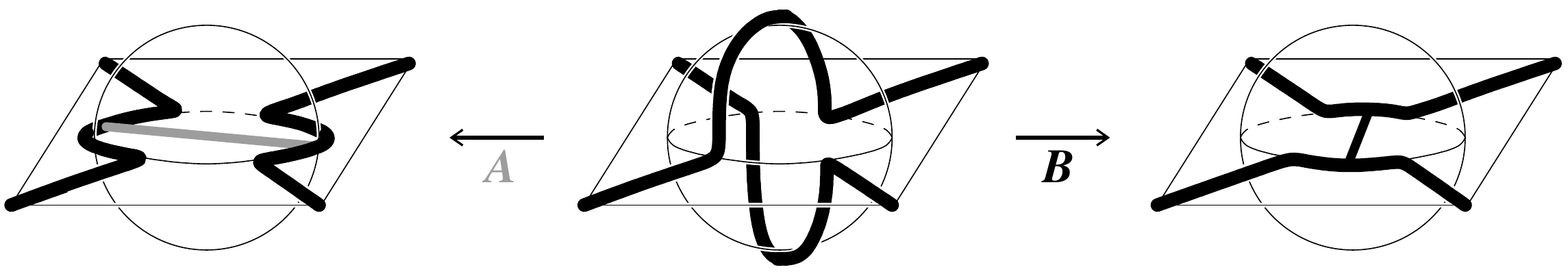}
\caption{Use crossing balls $C=\bigsqcup C^t$ to embed a link and its states in $(S^2\setminus C)\cup\partial C$.}\label{Fi:BubbleSmooth}
\end{center}
\end{figure}
When $L$ is an oriented link, $2i_x$ is the sum of the component--wise boundary slopes of $F_x$, do not depend on the orientation on $L$, and twice the link components' pairwise linking numbers, which do:
\[2i_x=\sum_{\text{link components }L_i}\text{lk}(L_i,\hat{L}_i)+\sum_{\text{ordered pairs of distinct link components }(L_i,L_j)}\text{lk}(L_i,L_j).\]
Correspondences between (enhanced) states and surfaces invite geometric interpretations of Khovanov homology. The author plans to discuss these in a future paper. 
%
\subsection{Plumbing}
In the context of 3-manifolds, {\it plumbing} or {\it Murasugi sum}, is an operation on states, links, and spanning surfaces.  
{ Plumbing two states} $x$, $y$ simply involves gluing these states along a single state circle in such a way that the resulting diagram 
is also a state. Such a plumbing $x*y=z$ is \textit{external} in the sense that it depends on a gluing map ${g}:(S^2,x)\sqcup (S^2, y)\to (S^2,z)$ (cf Figure \ref{Fi:PlumbMap}); the notation $x\overset{{g}}{*}y$ makes this dependence explicit.  

The plumbed state $z=x\overset{{g}}{*}y$ \textit{de--plumbs} as a gluing of the states $g(x)$, $g(y)$ along the state circle $g(x)\cap g(y)$.  Viewing the { plumbing factors} $g(x)$, $g(y)$ as subsets of $z$ and identifying $g(x)$ with $x$, $g(y)$ with $y$ in the obvious way, denote this de-plumbing by $z=x*y$.  This (de--)plumbing $z=x*y$ is \textit{internal} in the sense that $x$ and $y$ are subsets of $z$, and so no extra gluing information is needed. The distinction between internal and external plumbing, taken in analogy with internal and external free products of groups, will help with labeling; usually the distinction is immaterial and we make no comment.

 If $x
 {*}y=z$ is a plumbing of states, then there is an associated {plumbing of link diagrams}, $D_x*D_y=D_z$, and an associated {plumbing of the underlying links}. 
There is also an associated {plumbing of state surfaces}, $F_x*F_y=F_z$; here is how this works.
 Viewing $x*y=z$ as an internal plumbing, let $z_0$ be the state circle comprising $x\cap y$, and let $U$ be the disk that $z_0$ bounds in  the state surface $F_z$. 
There is an embedded sphere $Q\subset S^3$ transverse to the projection sphere $S^2$ with $Q\cap F_z=U$ and $Q\cap S^2=z_0$; let $B_x$, $B_y$ denote the (closed) balls into which $Q$ cuts $S^3$, such that $x\subset B_x$, $y\subset B_y$. The surfaces $F_x:=F_z\cap B_x$, $F_y:=F_z\cap B_y$ are the state surfaces for $x$, $y$, respectively, and plumbing these surfaces along $Q$ produces $F_x*F_y=F_z$.

\begin{figure}
\begin{center}
\includegraphics[width=4.5in]{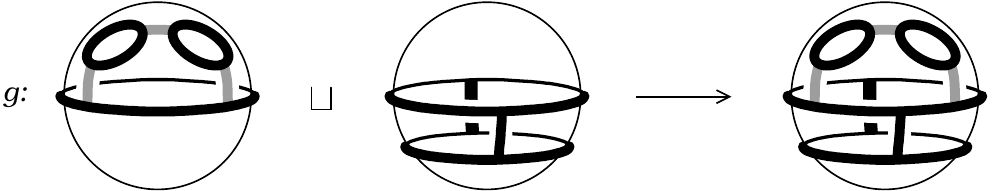}\\
\caption{A gluing map ${g}:(S^2,x)\sqcup(S^2,y)\to (S^2,z)$ for a plumbing  of states $x*y=z$.}\label{Fi:PlumbMap}
\end{center}
\end{figure}

For general interest, we briefly describe two more general notions of (de-){plumbing of spanning surfaces}. First, suppose $F$ spans a link $K\subset S^3$ 
and  $Q\subset S^3$ is a sphere which 
intersects $F$ (non--tranversally) in a disk $U=Q\cap F$. 
If $B_0$, $B_1$ are the (closed) balls into which $Q$ cuts $S^3$ and $F_0=B_0\cap F$, $F_1=B_1\cap F$, so that $F_0\cap F_1=F\cap B_0\cap B_1=F\cap Q=U$, then the sphere $Q$ is said to {\it de-plumb} $F$ as $F=F_0*F_1$.  

A second, more general notion of plumbing, better suited for iteration, views a regular neighborhood of $\text{int}(F)$ in the link complement $S^3\setminus K$ in terms of 
a line bundle $\rho:N\to \text{int}(F)$ and allows de-plumbing along a sphere $Q$ which 
(i) is transverse in $S^3$ to $K$ and $F$, and in $S^3\setminus K$ to the fibers of  $\rho$; and which (ii) 
intersects $F$ in a disk $U$ which is (the image of) a local section of $\rho$. Letting $B_0$, $B_1$ denote the balls into which $Q$ cuts $S^3$, the resulting plumbing factors are $\rho(F\cap B_0)$, $\rho(F\cap B_1)$.  
%

\section{Plumbing Khovanov chains}\label{S:plumb}
The all--$\color{gray}A\color{black}$ state of a link diagram $D$ is always homogeneous; if this state is adequate, then $D$ is called $\color{gray}A\color{black}$--adequate; $B$--adequacy is defined analogously.  It is easy to see, recalling Figure \ref{Fi:TrefGridJones}, that Khovanov homology over any coefficient ring $R$ with $1$ detects any $\color{gray}A\color{black}$-- or $B$--adequate state $x$, in the sense that $\Cs_R(x)$ contains a non--exact cycle. 
The main theorem extends this fact in case $R=\F_2$ to all homogeneously adequate states, and in case $R=\Z$ (or any other ring with $1$ in which $2$ is not a unit) to such states whose $\color{gray}A\color{black}$--zones are bipartite. The inductive key for extending in this way is the operation of plumbing on Khovanov chains.
\begin{figure}
\begin{center}
\includegraphics[width=3.5in]{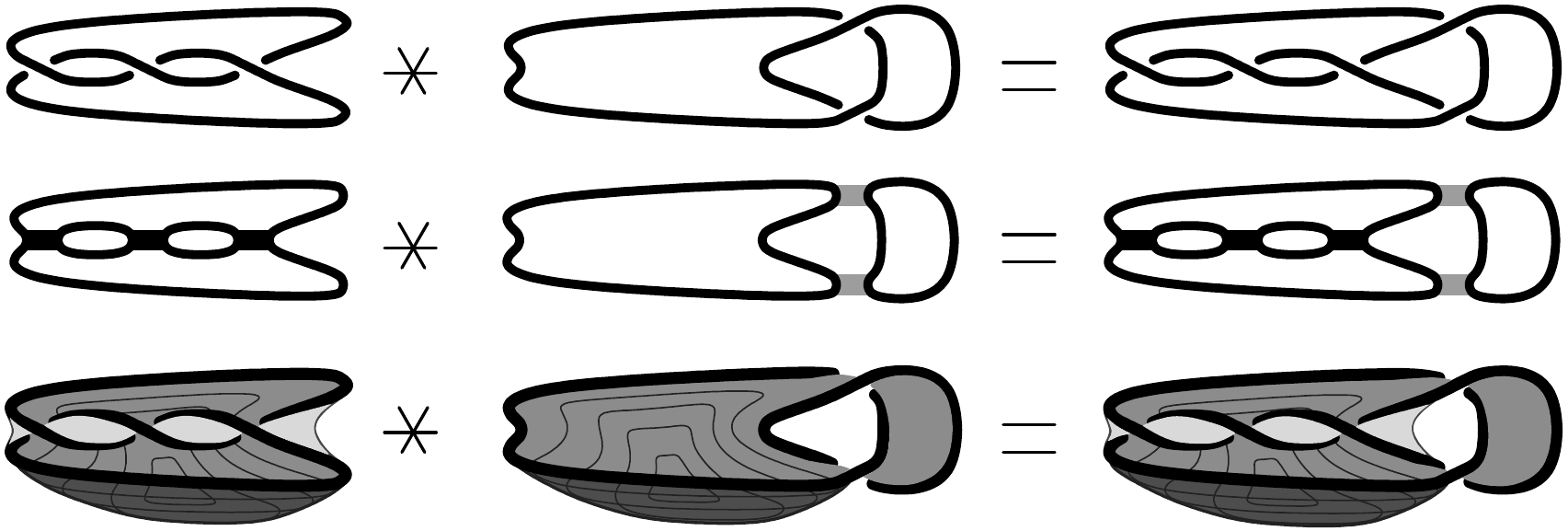}
\caption{Plumbings of link diagrams, states, and surfaces.}\label{Fi:52Plumbings}
\end{center}
\end{figure}

Suppose that $z=x\overset{{g}}{*}y$ is a plumbing of states along a state circle $z_{t_0}={g}(x_{r_0})={g}(y_{s_0})$, and that $p_z={g}(p_x)={g}(p_y)$ is a point away from crossing arcs, so that $p_x\in x_{r_0}$, $p_y\in y_{s_0}$, $p_z\in z_{t_0}$. If $X$, $Y$ enhance $x$, $y$, respectively, in such a way that $p_x$ has the same label in $X$ that $p_y$ has in $Y$, then there is an enhancement $Z$ of $z=x\overset{{g}}{*}y$ which assigns each state circle ${g}(x_{r})\subset {g}(x)$ the same label that $X$ assigns $x_{r}$, and which  assigns each state circle ${g}(y_s)\subset {g}(y)$ the same label that $Y$ assigns $y_s$; denote this enhancement by $Z=X\overset{{g}}{*}Y$ and call it the (external) {plumbing of the chains} $X$, $Y$ by ${g}$.  Extend $R$--linearly to view plumbing 
 as an $R$--module isomorphism:
\begin{align*}
\overset{{g}}{*}:\left(\Cs_{R,\color{ForestGreen}p_x\to1\color{black}}(x)\otimes\Cs_{R,\color{ForestGreen}p_y\to1\color{black}}(y)\right)\oplus\left(\Cs_{R,\color{NavyBlue}p_x\to0\color{black}}(x)\otimes\Cs_{R,\color{NavyBlue}p_y\to0\color{black}}(y)\right)
&\to\Cs_{R}(x\overset{{g}}{*}y),\\
\overset{{g}}{*}:X\otimes Y&\mapsto X\overset{{g}}{*}Y.\\
\end{align*}
There is also an internal notion of plumbing on chains, in which $x$ and $y$ are viewed as subsets of $z$.  For convenience, we take a mixed approach, viewing $x$ and $y$ as subsets of $z$ for simplicity, and using the gluing map $g:(S^2,x)\sqcup(S^2,y)\to(S^2,z)$ for relabeling.  This should cause no confusion.  
How does plumbing of chains, $*$, interact with the differential, ${d}$? Consider the simplest case, connect sum ${\natural}$.
\subsection{Connect sum of chains}\label{S:connectsum} 
A state $z$ is a {\it connect sum} if there is a simple closed curve $\gamma\subset S^2$ which intersects $z$ transversally in two points, $p$, $b$, neither of them on crossing arcs (this implies that $p$, $b$ lie on the same state circle, $z_{t}$), such that both components of $S^2\setminus \gamma$ contain crossing arcs.  In this case, $z$ decomposes as the {connect sum} $x{\natural}y=z$, where $x\subset z$ consists $z_t$ together with all state circles and crossing arcs to one side of $\gamma$ in $S^2$, and $y\subset z$ consists of $z_t$ together with all state circles and crossing arcs to the opposite side of $\gamma$. 
If $D_x$, $D_y$, $D_z$ are the underlying link diagrams for $x$, $y$, $z$ then there is also a connect sum of link diagrams, $D_x{\natural}D_y=D_z$.  Moreover, every state $z'$ of $D_z$ decomposes as a connect sum $x'{\natural}y'=z'$, where $x'$ is a state of $D_x$ and $y'$ is a state of $D_y$.
If $Z'$ enhances $x'{\natural}y'=z'$, then $Z'$ restricts on $x'$, $y'$ to enhancements $X'$, $Y'$, whose labels match at $p$: 
either both are $\color{ForestGreen}1\color{black}$ or both are $\color{NavyBlue}0\color{black}$.  This supplies the $R$--module isomorphism:
\begin{align}\label{E:connectsumiso} \begin{split}
 \Cs_R(D_x{\natural} D_y) &=
 \Cs_{R,\color{ForestGreen}p\to1\color{black}}(D_x{\natural} D_y)\oplus\Cs_{R,\color{NavyBlue}p\to0\color{black}}(D_x{\natural} D_y)\\
&\to
  \left(\Cs_{R,\color{ForestGreen}p\to1\color{black}}(D_x)\otimes \Cs_{R,\color{ForestGreen}p\to1\color{black}}(D_y)\right)\oplus\left(\Cs_{R,\color{NavyBlue}p\to0\color{black}}(D_x)\otimes \Cs_{R,\color{NavyBlue}p\to0\color{black}}(D_y)\right). 
  \end{split}\end{align}
 %
Use this isomorphism to write each enhanced state $Z$ from $D_z=D_x{\natural}D_y$ as $Z=X\natural Y$. If  $D_z=D_x{\natural}D_y$ respects orientation, then $i_{Z}=i_{X}+i_{{Y}}$, and  $j_{Z}=j_{X}+j_{{Y}}+1$ in case ${X}\in\Cs_{R,\color{ForestGreen}p\to 1\color{black}}(D_x)$, ${Y}\in \Cs_{R,\color{ForestGreen}p\to 1\color{black}}(D_y)$ or  $j_{Z}=j_{X}+j_{{Y}}-1$ in case ${X}\in\Cs_{R,\color{NavyBlue}p\to 0\color{black}}(D_x)$, ${Y}\in \Cs_{R,\color{NavyBlue}p\to 0\color{black}}(D_y)$.
%
   
If $X\in \Cs_{\F_2,\color{ForestGreen}p\to1\color{black}\color{black}}\left(D_x\right)$, $Y\in\Cs_{\F_2,\color{ForestGreen}p\to1\color{black}\color{black}}\left(D_y\right)$, then ${d}\left(X{\natural}Y\right)={d} X{\natural}Y+X{\natural}{d} Y.$
This  follows straight from the definition of $d$ (cf Figure \ref{Fi:incidence}). The case $X\in \Cs_{\F_2,\color{NavyBlue}p\to0\color{black}\color{black}}\left(D_x\right)$, $Y\in\Cs_{\F_2,\color{NavyBlue}p\to0\color{black}\color{black}}\left(D_y\right)$ is more awkward, requiring variants of the operation ${\natural}$ in left-- and right--trumps, $\tensor[_\diamondsuit]{{\natural}}{},~\tensor[]{{\natural}}{_\diamondsuit}:\Cs_{\F_2}\left(D_x\right)\otimes \Cs_{\F_2}\left(D_y\right)\to\Cs_{\F_2}\left(D_x{\natural}D_y\right)$. 
%

Suppose that $x{{\natural}}y=z$ is a connect sum of states along a state circle $z_{t_0}=x\cap y$, with $z_{t_0}\ni p$, 
and suppose that $X$, $Y$ enhance $x$, $y$, not necessarily with matching labels at $p$.
Define
$X\tensor[_\diamondsuit]{{\natural}}{}Y$
 to be the enhancement of $x\natural y$ 
which assigns each state circle of $x$ the same label that $X$ assigns it, and which  assigns each state circle  of $y$, \textit{except possibly} $z_{t_0}$, the same label that $Y$ does.  
Likewise, define 
$X\tensor[]{{\natural}}{_\diamondsuit}Y$  
to be the enhancement of $z$ 
which assigns each state circle of $x$ the same label that $X$ does, and which  assigns each state circle  of $y$, \textit{except possibly} $z_{t_0}$, the same label that $Y$ does.  
Thus, 
$X\tensor[_\diamondsuit]{{\natural}}{}Y$
and
$X\tensor[]{{\natural}}{_\diamondsuit}Y$ 
are the enhancements of $z$ which match $X$ and $Y$ away from $z_{t_0}$; at $z_{t_0}$, the label on $z_{t_0}$ from $X$ {\it trumps} the label from $Y$ in $X\tensor[_\diamondsuit]{{\natural}}{}Y$, and vice-versa in $X\tensor[]{{\natural}}{_\diamondsuit}Y$. Whenever $X{\natural}Y$ is defined, it equals both $X\tensor[_\diamondsuit]{{\natural}}{}Y$
and
$X\tensor[]{{\natural}}{_\diamondsuit}Y$.
The most immediate payoff is the following identity, 
which holds over any $R$ with $1$:
 \[{d}(X{\natural}Y)={d} X\tensor[_\diamondsuit]{{\natural}}{}Y+\left(-1\right)^{| \raisebox{-.02in}{\includegraphics[width=.1in]{ASmooth.pdf}}|_X} X\tensor[]{{\natural}}{_\diamondsuit}{d} Y,\]
and in particular over $R=\F_2$:
 \[{d}(X{\natural}Y)={d} X\tensor[_\diamondsuit]{{\natural}}{}Y+X\tensor[]{{\natural}}{_\diamondsuit}{d} Y.\]
%
%
\subsection{General construction.}\label{S:construction}
Let $x*y=z$ be a plumbing of states by a gluing map ${g}:x\sqcup y\to z$, and let $D_x*D_y=D_z$ be the associated plumbing of link diagrams. Index the crossings $c^t_z$ of $D_z$ so that those from $D_x$ precede those from $D_y$: 
$c_z^t=c_x^t$ for $1\leq t\leq \left|\raisebox{-.02in}{\includegraphics[width=.125in]{Crossing.pdf}}\right|_x$, 
$c_z^t=c_y^{t-|\raisebox{-.02in}{\includegraphics[width=.1in]{Crossing.pdf}}|_x}$ for $1+\left|\raisebox{-.02in}{\includegraphics[width=.125in]{Crossing.pdf}}\right|_x\leq t\leq \left|\raisebox{-.02in}{\includegraphics[width=.125in]{Crossing.pdf}}\right|_x+ \left|\raisebox{-.02in}{\includegraphics[width=.125in]{Crossing.pdf}}\right|_y$. Likewise, index the state circles $z_r$ of $z$ so that those from $x$ precede those from $y$: 
$z_r=x_r$ for $1\leq r\leq \left|\bigcirc\right|_x$,  
$z_r=y_{r+1-\left|\bigcirc\right|_x}$ for $\left|\bigcirc\right|_x\leq r\leq \left|\bigcirc\right|_x+ \left|\bigcirc\right|_y-1$. Note that $z_{|\bigcirc|_x}=x_{|\bigcirc|_x}=y_1$.

Let  $X$, $Y$ enhance $x$, $y$, and write $X=q^{a_1}\otimes\cdots\otimes q^{a_{|\bigcirc|_x}}$, $Y=q^{b_1}\otimes\cdots\otimes q^{b_{|\bigcirc|_{y}}}$, with each $a_r,b_r\in\{\color{NavyBlue}0\color{black},\color{ForestGreen}1\color{black}\}$ according to whether the associated state circle is labeled \color{NavyBlue}0 \color{black} or \color{ForestGreen}1\color{black}, as in \textsection\ref{S:viro}.  The plumbing of $X$ and $Y$ by ${g}$ is the enhancement $X*Y=Z$ of the state $x*y=z$ which matches $X$ on the state circles from $x$ and which matches $Y$ on those from $y$, if such an enhancement exists:
\[X
{*}Y=\begin{cases}
q^{a_1}\otimes\cdots \otimes q^{a_{|\bigcirc|_x-1}}\otimes q^{b_1}\otimes q^{b_2}\otimes\cdots\otimes q^{b_{|\bigcirc|_{y}}}&\text{if }
a_{|\bigcirc|_x}= b_1,\\
\text{undefined}&\text{if }
a_{|\bigcirc|_x}\neq b_1.\\ 
\end{cases}\]
Extend $R$--linearly to obtain the following isomorphism of $R$--modules:
\begin{align*}
\overset{{g}}{*}:\left(\Cs_{R,\color{ForestGreen}p\to1\color{black}}(x)\otimes \Cs_{R,\color{ForestGreen}p\to1\color{black}}(y)\right)\oplus\left(\Cs_{R,\color{NavyBlue}p\to0\color{black}}(x)\otimes \Cs_{R,\color{NavyBlue}p\to0\color{black}}(y)\right)&\to \Cs_R(x*y),\\
X\otimes Y&\mapsto X*Y.
\end{align*}
Recall from \textsection\ref{S:intro} that Khovanov homology over $\F_2$ detects adequate all--$\color{gray}A\color{black}$ and all--$B$ states, in the sense that such a state has enhancements $X^\pm$ with $j_{X^+}=j_{X^-}+2$, such that $\text{tr}_{\F_2}X^+$, $\text{tr}_{\F_2}X^-$ represent nonzero homology classes.  Moreover, every homogeneously adequate state is a plumbing of adequate all--$\color{gray}A\color{black}$ and all--$B$ states.  
The main theorem will follow inductively from this setup, using the interaction between plumbing and the differential, which we describe next. 
\subsection{Trumps.}\label{S:trumps}
Let $x*y=z$ be a plumbing by a gluing map $g$, 
and let $x'$, $y'$ be arbitrary states of $D_x$, $D_y$.  In an abuse of terminology and notation, define the {plumbing}  $x'* y$ to be the state of $D_x*D_y=D_z$ whose smoothings match those of $x'$ and $y$; likewise, define the {plumbing}  $x* y'$ to be the state of $D_z$ whose smoothings match those of $x$ and $y'$.  In terms of the crossing ball setup from \textsection\ref{S:statesurface}, 
\[x'{*}{y}=\big((x'\cup y)\setminus \left(x\cap y\right)\big)\cup \big(x'\cap y\big),\qquad
x{*}{y'}=\big((x\cup y')\setminus \left(x\cap y\right)\big)\cup \big(x\cap y'\big).
\]%
%
If $X'$ is an enhancement of $x'$ and $Y$ is an enhancement of $y$, define the {\it left--trump plumbing $X'\tensor[_\diamondsuit]{{}{*}}{}Y$} by $g$ to be the enhancement of $x'*y$  which assigns each state circle $x'_r\subset x'$ the same label that $X'$ assigns $x'_r$, and which  assigns each state circle $y_r\subset y$, \textit{except possibly} $y_1$ which need not be a state circle in $x'*y$, the same label that $Y$ assigns $y_r$.
Likewise, 
if $Y'$ is an enhancement of a state $y'$ of $D_y$ and $X$ is an enhancement of $x$, the {\it right--trump plumbing $X\tensor[]{{}{*}}{_\diamondsuit}Y'$} is the enhancement of $x*y'$  which assigns each state circle $x_r\subset x$, \textit{except possibly} $x_{|\bigcirc|_x}$, the same label that that $X$ does, and which  assigns each state circle $y'_r\subset y'$ the same label that $Y'$ does. That is, 
$X'\tensor[_\diamondsuit]{{\natural}}{}Y$
and
$X\tensor[]{{\natural}}{_\diamondsuit}Y'$ 
are the respective enhancements of $x'*y$ and $x*y'$ which match $X'$, $Y$ and $X$, $Y'$ away from ${g}(x)\cap {g}(y)=z_{|\bigcirc|_x}$; at $z_{|\bigcirc|_x}$, the labels from $X'$ {\it trump} the labels from $Y$ in $X'\tensor[_\diamondsuit]{{\natural}}{}Y$, and the labels from $Y'$ {\it trump} those from $X$ in  $X\tensor[]{{\natural}}{_\diamondsuit}Y'$:
\begin{align*}\tensor[_\diamondsuit]{\overset{{g}}{*}}{}:\Cs_{R}(D_x)\otimes \Cs_{R}(y)&\to\Cs_{R}\left(D_x{\natural}D_y\right),
\hspace{.5in}&
\tensor[]{\overset{{g}}{*}}{_\diamondsuit}:\Cs_{R}(x)\otimes \Cs_{R}(D_y)&\to\Cs_{R}\left(D_x{\natural}D_y\right),\\
X'\otimes Y&\mapsto X'\tensor[_\diamondsuit]{\overset{{g}}{*}}{}Y&X\otimes Y'&\mapsto X\tensor[]{\overset{{g}}{*}}{_\diamondsuit}Y'\\, \end{align*}
\begin{prop}\label{P:trumpPlumb}
If $X{*}Y$ enhances $x* y=z$, then ${d}(X*Y)={d} X \tensor[_\diamondsuit]{*}{}Y+\left(-1\right)^{| \raisebox{-.02in}{\includegraphics[width=.1in]{ASmooth.pdf}}|_x}
 X\tensor[]{*}{_\diamondsuit}{d} Y$.\end{prop}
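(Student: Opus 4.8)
The plan is to reduce to the connect-sum identity already recorded in \textsection\ref{S:connectsum} by a crossing-by-crossing analysis. Write $Z = X*Y$, enhancing $z = x*y$, and recall that $d Z = \sum_t (-1)^{|\raisebox{-.02in}{\includegraphics[width=.1in]{ASmooth.pdf}}|_Z^t}\, d_{c^t_z} Z$, where the crossings of $D_z$ are indexed so that all crossings $c^1_x,\dots,c^{|\raisebox{-.02in}{\includegraphics[width=.1in]{Crossing.pdf}}|_x}_x$ of $D_x$ precede all crossings of $D_y$ (\textsection\ref{S:construction}). I would split this sum as $\sum_{t\le |\raisebox{-.02in}{\includegraphics[width=.1in]{Crossing.pdf}}|_x} + \sum_{t > |\raisebox{-.02in}{\includegraphics[width=.1in]{Crossing.pdf}}|_x}$ and match the first block with $d X \tensor[_\diamondsuit]{*}{} Y$ and the second with $(-1)^{|\raisebox{-.02in}{\includegraphics[width=.1in]{ASmooth.pdf}}|_x}\, X\tensor[]{*}{_\diamondsuit} d Y$. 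The sign bookkeeping is the routine part: for $t\le |\raisebox{-.02in}{\includegraphics[width=.1in]{Crossing.pdf}}|_x$ the count $|\raisebox{-.02in}{\includegraphics[width=.1in]{ASmooth.pdf}}|_Z^t$ only sees crossings of $D_x$, so it equals $|\raisebox{-.02in}{\includegraphics[width=.1in]{ASmooth.pdf}}|_X^t$ and the first block is literally $(d X)\tensor[_\diamondsuit]{*}{} Y$; for $t > |\raisebox{-.02in}{\includegraphics[width=.1in]{Crossing.pdf}}|_x$ it splits as $|\raisebox{-.02in}{\includegraphics[width=.1in]{ASmooth.pdf}}|_x + |\raisebox{-.02in}{\includegraphics[width=.1in]{ASmooth.pdf}}|_Y^{t - |\raisebox{-.02in}{\includegraphics[width=.1in]{Crossing.pdf}}|_x}$, producing the global sign $(-1)^{|\raisebox{-.02in}{\includegraphics[width=.1in]{ASmooth.pdf}}|_x}$ in front of $X\tensor[]{*}{_\diamondsuit} d Y$.

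The content is in checking that $d_{c^t_z} Z$ agrees, state circle by state circle, with the corresponding term on the right. The key observation is that plumbing glues $x$ and $y$ only along the single state circle $z_0 := g(x_{r_0}) = g(y_{s_0})$, and every crossing arc of $D_z$ is entirely inside $g(x)$ or entirely inside $g(y)$; so resolving a crossing $c^t_z$ with $t\le |\raisebox{-.02in}{\includegraphics[width=.1in]{Crossing.pdf}}|_x$ only merges/splits state circles of $g(x)$, and the incidence rules of Figure \ref{Fi:incidence} act exactly as they do in $d_{c^t_x} X$, leaving every state circle of $g(y)$ untouched except possibly $z_0$. This is precisely the situation encoded by the left-trump plumbing $\tensor[_\diamondsuit]{*}{}$: the resulting enhanced state of $D_z$ matches $d_{c^t_x}X$ on the circles of $x$ and matches $Y$ on the circles of $y$ other than $z_0$, with $X$'s label at $z_0$ (as possibly modified by the crossing resolution) taking precedence. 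I would spell this out in the three cases a crossing resolution can produce at $z_0$: (a) the resolution neither merges nor splits $z_0$ — then $\tensor[_\diamondsuit]{*}{}$ reduces to ordinary $*$; (b) $z_0$ merges with another circle of $x$; (c) $z_0$ splits into two circles, one of which inherits the $p$-side. In each case one checks that the enhancement produced by first applying $d_{c^t_x}$ and then left-trump-plumbing onto $Y$ equals $d_{c^t_z}(X*Y)$ with the circle labels read off directly. The symmetric statement holds verbatim for $t>|\raisebox{-.02in}{\includegraphics[width=.1in]{Crossing.pdf}}|_x$ using the right-trump plumbing $\tensor[]{*}{_\diamondsuit}$.

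Summing over $t$ in each block and reassembling gives
\[
d(X*Y) \;=\; \sum_{t\le |\raisebox{-.02in}{\includegraphics[width=.1in]{Crossing.pdf}}|_x}(-1)^{|\raisebox{-.02in}{\includegraphics[width=.1in]{ASmooth.pdf}}|_X^t} (d_{c^t_x}X)\tensor[_\diamondsuit]{*}{}Y \;+\; (-1)^{|\raisebox{-.02in}{\includegraphics[width=.1in]{ASmooth.pdf}}|_x}\!\!\sum_{s\le |\raisebox{-.02in}{\includegraphics[width=.1in]{Crossing.pdf}}|_y}(-1)^{|\raisebox{-.02in}{\includegraphics[width=.1in]{ASmooth.pdf}}|_Y^s} X\tensor[]{*}{_\diamondsuit}(d_{c^s_y}Y),
\]
which is exactly $dX\tensor[_\diamondsuit]{*}{}Y + (-1)^{|\raisebox{-.02in}{\includegraphics[width=.1in]{ASmooth.pdf}}|_x} X\tensor[]{*}{_\diamondsuit}dY$ once one notes that the trump plumbings are $R$--bilinear, so they commute with the finite sums defining $dX$ and $dY$. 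The one genuine subtlety — the main obstacle — is case (c) above, where resolving a crossing in $D_x$ splits $z_0$: one must verify that the split circle carrying the point $p$ is the one whose label the trump rule keeps, so that the trump plumbing is well-defined and matches $d_{c^t_z}Z$; this is where the asymmetry built into the definitions of $\tensor[_\diamondsuit]{*}{}$ and $\tensor[]{*}{_\diamondsuit}$ (each trumping "except possibly at the shared circle") is doing real work, and it is worth stating explicitly that the label the rule discards is never the one constrained by $X*Y$ being defined. Everything else is a direct comparison of Figure \ref{Fi:incidence} applied in $D_z$ versus in $D_x$ and $D_y$ separately.
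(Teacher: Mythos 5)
Your proposal is correct and takes essentially the same route as the paper's proof: index the crossings of $D_z$ so those of $D_x$ precede those of $D_y$, split the sum defining $d(X*Y)$ into these two blocks, note that the sign exponent restricts to the count in $x$ on the first block and splits off a global factor $(-1)^{\#\{A\text{--smoothings of }x\}}$ on the second, and identify each summand with the corresponding left-- or right--trump plumbed term. The paper records only this three-line computation, leaving implicit the crossing-by-crossing comparison of incidence rules that you spell out in your cases (a)--(c).
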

\begin{proof}Let $D_x$, $D_y$, $D_z$ be the 
link diagrams for $x$, $y$, $z$. Index the crossings as in \textsection\ref{S:construction}, and again let $| \raisebox{-.02in}{\includegraphics[width=.12in]{ASmooth.pdf}}|_{x}^t$ denote the number of crossings $c_x^r$ in $D_x$ with $r<t$ at which $x$ has an $\color{gray}A\color{black}$--smoothing. 
Now:
\begin{align*}\pushQED{\qed} {d} \big(&{X{*}Y}\big)
=\sum_{t=1}^{|\raisebox{-.02in}{\includegraphics[height=.1in]{Crossing.pdf}}|_{D_x}}
\left(-1\right)^{| \raisebox{-.02in}{\includegraphics[width=.1in]{ASmooth.pdf}}|_{z}^t}
{d}_{c_z^t}( {X{*}Y})
+ \sum_{t=1+|\raisebox{-.02in}{\includegraphics[height=.1in]{Crossing.pdf}}|_{D_x}}
^{|\raisebox{-.02in}{\includegraphics[height=.1in]{Crossing.pdf}}|_{D_x}+|\raisebox{-.02in}{\includegraphics[height=.1in]{Crossing.pdf}}|_{D_y}}
\left(-1\right)^{| \raisebox{-.02in}{\includegraphics[width=.1in]{ASmooth.pdf}}|_{z}^t}
{d}_{c_z^t}\big({X{*}Y}\big)\\
&=\sum_{t=1}^{|\raisebox{-.02in}{\includegraphics[height=.1in]{Crossing.pdf}}|_{D_x}} 
\left(-1\right)^{| \raisebox{-.02in}{\includegraphics[width=.1in]{ASmooth.pdf}}|_{x}^t}
{d}_{c_x^t}X\tensor[_\diamondsuit]{*}{}Y
+\left(-1\right)^{| \raisebox{-.02in}{\includegraphics[width=.1in]{ASmooth.pdf}}|_x}
\sum_{t=1}^{|\raisebox{-.02in}{\includegraphics[height=.1in]{Crossing.pdf}}|_{D_y}} 
\left(-1\right)^{| \raisebox{-.02in}{\includegraphics[width=.1in]{ASmooth.pdf}}|_{y}^t}
X\tensor[]{*}{_\diamondsuit}{d}_{c_y^t} Y\\
&={d} X\tensor[_\diamondsuit]{*}{}Y
+\left(-1\right)^{| \raisebox{-.02in}{\includegraphics[width=.1in]{ASmooth.pdf}}|_x} X\tensor[]{*}{_\diamondsuit}{d} Y.
\qedhere \\ \end{align*}
\end{proof}
%
%
\begin{figure}
\begin{center}
\scalebox{2}{$\raisebox{-.05in}{\includegraphics[width=.4in]{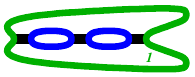}}*\raisebox{-.05in}{\includegraphics[width=.55in]{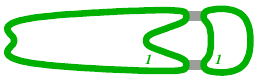}}=\raisebox{-.05in}{\includegraphics[width=.55in]{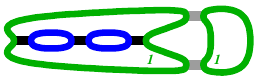}}$}
\caption{If $x{*}y$ is a plumbing of states and $X\in \Cs_R(x)$, $Y\in\Cs_R(y)$ are cycles with $X*Y\in \Cs_R(x*y)$, then $X*Y$  is also a cycle (cf Proposition \ref{P:trumpPlumb}).}\label{Fi:CyclePlumbing}
\end{center}
\end{figure}
\subsection{Cycles}
Let $\kappa:(S^2,x)\sqcup({S^2},\bigcirc)\to (S^2,x)$ be a gluing map of a state $x$ with the state $\bigcirc$ of the trivial diagram,  
so that $x=x\overset{_{\kappa}}{*}\bigcirc$ and $\kappa|_x=\id_x$. 
Suppose $p$ is a point on the state circle $\kappa^{-1}(\bigcirc)\subset x$, and
$X\in \Cs_{R,\color{ForestGreen}p\to1\color{black}}(x)$, $X'\in \Cs_{R,\color{NavyBlue}p\to0\color{black}}(x)$ are chains. 
Say that $X$ and $X'$ are {\it identical away from $p$} 
if $X\,{\tensor[]{\overset{\kappa}{*}}{_\diamondsuit}}\color{NavyBlue}\bigcirc\color{black}=X'$, or equivalently if $X'\,{\tensor[]{\overset{\kappa}{*}}{_\diamondsuit}}\color{ForestGreen}\bigcirc\color{black}=X$. 
\begin{obs}\label{O:identical}
If $X,X'\in \Cs_R(x)$ are identical away from $p$, 
and if $x*y=z$ is a plumbing of states by a gluing map $g:x\sqcup y\to z$ with $x\cap g^{-1}(y)=x\cap \kappa^{-1}(\bigcirc)\ni p$,
then $X\tensor[]{*}{_\diamondsuit}Y=X'\tensor[]{*}{_\diamondsuit}Y$  for any $Y\in \Cs_R(y)$.
\end{obs}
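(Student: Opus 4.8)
The plan is to unwind the two definitions involved --- ``identical away from $p$'' and the right--trump plumbing $\tensor[]{*}{_\diamondsuit}$ --- and observe that the latter was set up precisely so that the label of the glued state circle contributes nothing from the first factor. First I would reduce to enhanced states: both sides of the asserted equality are $R$--linear in $X$ (and in $Y$), and by definition ``$X$ and $X'$ identical away from $p$'' means $X\,\tensor[]{\overset{\kappa}{*}}{_\diamondsuit}\color{NavyBlue}\bigcirc\color{black}=X'$, which already exhibits $X'$ as the image under an $R$--linear map of $X$; so writing $X=\sum c_k X_k$ as a combination of enhanced states (each with the $p$--circle labeled $\color{ForestGreen}1\color{black}$) forces $X'=\sum c_k X_k'$, where $X_k'$ is $X_k$ with the label at $p$ toggled to $\color{NavyBlue}0\color{black}$. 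Hence it suffices to prove $X_k\tensor[]{*}{_\diamondsuit}Y_j=X_k'\tensor[]{*}{_\diamondsuit}Y_j$ for enhanced states $X_k,X_k',Y_j$.

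For enhanced states, ``identical away from $p$'' says exactly that $X_k$ and $X_k'$ assign the same label to every state circle of $x$ except possibly the one containing $p$. By the hypothesis $x\cap g^{-1}(y)=x\cap\kappa^{-1}(\bigcirc)\ni p$, that exceptional circle is precisely the circle along which the plumbing $x*y=z$ is glued --- the circle $x_{|\bigcirc|_x}=z_{|\bigcirc|_x}$ in the indexing of \textsection\ref{S:construction}. Now recall that $X_k\tensor[]{*}{_\diamondsuit}Y_j$ is by definition the enhancement of $z$ that agrees with $X_k$ on every state circle of $x$ \emph{except possibly} $x_{|\bigcirc|_x}$, and agrees with $Y_j$ on $z_{|\bigcirc|_x}$ and on every state circle coming from $y$. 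Thus the label $X_k$ places on $x_{|\bigcirc|_x}$ never enters the construction of $X_k\tensor[]{*}{_\diamondsuit}Y_j$; since $X_k$ and $X_k'$ agree on all remaining circles of $x$ and $Y_j$ is held fixed, $X_k\tensor[]{*}{_\diamondsuit}Y_j=X_k'\tensor[]{*}{_\diamondsuit}Y_j$. Extending $R$--linearly completes the argument.

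There is essentially no real obstacle here; the only care needed is the bookkeeping that identifies the state circle through $p$ with the one along which $g$ glues, and this is exactly what the hypothesis $x\cap g^{-1}(y)=x\cap\kappa^{-1}(\bigcirc)\ni p$ encodes. If anything, the ``content'' of the observation is the recognition that the right--trump operation $\tensor[]{*}{_\diamondsuit}$ was defined so as to forget the first factor's label on the glued circle, which is precisely what makes the identity of Proposition \ref{P:trumpPlumb} a usable tool in the inductive proof of the main theorem.
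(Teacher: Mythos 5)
Your argument is correct and is exactly the intended justification: the paper states this as an Observation with no written proof, treating it as immediate from the definitions, and your unwinding --- reduce to enhanced states by $R$--linearity, note that ``identical away from $p$'' only toggles the label on the glued circle, and note that the right--trump operation discards the first factor's label on that circle --- is precisely that reasoning. Nothing is missing.
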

\begin{prop}\label{P:plumbCycle}
If $x{*}y$ is a plumbing of states and $X$, $X'$, $Y+Y'$  are cycles, with $X\in\Cs_{R,\color{ForestGreen}p\to1\color{black}}(x)$, $X'\in\Cs_{R,\color{NavyBlue}p\to0\color{black}}(x)$ identical away from $p\in x\cap y$ and $Y\in\Cs_{R,\color{ForestGreen}p\to1\color{black}}(y)$, $Y'\in\Cs_{R,\color{NavyBlue}p\to0\color{black}}(y)$, then $X*Y+X'*Y'$  is also a cycle. 
\end{prop}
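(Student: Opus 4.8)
The plan is to differentiate the chain $X*Y+X'*Y'\in\Cs_R(x*y)$ with Proposition \ref{P:trumpPlumb} and watch the terms collapse against the hypotheses. First I would note that both $X*Y$ and $X'*Y'$ are actually defined: $X$ and $Y$ each carry the label $\color{ForestGreen}1\color{black}$ at $p$, while $X'$ and $Y'$ each carry $\color{NavyBlue}0\color{black}$, so in either case the labels agree along the gluing circle $z_{|\bigcirc|_x}$.

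Second, I would extend Proposition \ref{P:trumpPlumb} $R$--linearly to the chains $X,X'\in\Cs_R(x)$ and $Y,Y'\in\Cs_R(y)$; this is legitimate because the sign $(-1)^{| \raisebox{-.02in}{\includegraphics[width=.1in]{ASmooth.pdf}}|_x}$ depends only on the underlying state $x$, which is common to every enhanced state occurring in $X$ or $X'$. This gives
\[
{d}(X*Y)={d} X\tensor[_\diamondsuit]{*}{}Y+(-1)^{| \raisebox{-.02in}{\includegraphics[width=.1in]{ASmooth.pdf}}|_x}X\tensor[]{*}{_\diamondsuit}{d} Y,
\]
together with the same identity for $X',Y'$ in place of $X,Y$. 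Since $X$ and $X'$ are cycles, ${d} X={d} X'=0$, so the left--trump terms vanish and
\[
{d}(X*Y+X'*Y')=(-1)^{| \raisebox{-.02in}{\includegraphics[width=.1in]{ASmooth.pdf}}|_x}\big(X\tensor[]{*}{_\diamondsuit}{d} Y+X'\tensor[]{*}{_\diamondsuit}{d} Y'\big).
\]
Next I would invoke Observation \ref{O:identical} termwise on ${d} Y\in\Cs_R(D_y)$: since $X$ and $X'$ differ only in the label at $p$, which the second factor overrides under $\tensor[]{*}{_\diamondsuit}$, we have $X\tensor[]{*}{_\diamondsuit}W=X'\tensor[]{*}{_\diamondsuit}W$ for every enhanced state $W$ appearing in ${d} Y$, hence $X\tensor[]{*}{_\diamondsuit}{d} Y=X'\tensor[]{*}{_\diamondsuit}{d} Y$. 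Using $R$--bilinearity of $\tensor[]{*}{_\diamondsuit}$ and then that $Y+Y'$ is a cycle,
\[
X\tensor[]{*}{_\diamondsuit}{d} Y+X'\tensor[]{*}{_\diamondsuit}{d} Y'=X'\tensor[]{*}{_\diamondsuit}{d} Y+X'\tensor[]{*}{_\diamondsuit}{d} Y'=X'\tensor[]{*}{_\diamondsuit}{d}(Y+Y')=0,
\]
so ${d}(X*Y+X'*Y')=0$, as desired.

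The one genuinely delicate step, which I would scrutinize most carefully, is the termwise application of Observation \ref{O:identical}. As literally stated that observation concerns enhancements of the single state $y$, whereas ${d} Y$ is supported on the states $y'$ of $D_y$ obtained from $y$ by changing one $\color{gray}A\color{black}$--smoothing to a $B$--smoothing. So I would first record the mild generalization that, for $X,X'$ identical away from $p$, the equality $X\tensor[]{*}{_\diamondsuit}W=X'\tensor[]{*}{_\diamondsuit}W$ holds for every enhanced state $W$ of every state of $D_y$: the right--trump by construction never reads the label of its left factor on the gluing circle $z_{|\bigcirc|_x}$ (whether or not that circle persists in $x*y'$), while $X$ and $X'$ agree on every other state circle of $x$, so the local resmoothing turning $y$ into $y'$ cannot disturb the identification. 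Everything else is bookkeeping that follows from the displayed identities and the $R$--bilinearity of the trump operations.
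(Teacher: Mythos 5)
Your proposal is correct and follows essentially the same route as the paper: apply Proposition \ref{P:trumpPlumb}, kill the left--trump terms using $dX=dX'=0$, then use Observation \ref{O:identical} to merge the right--trump terms into a single right--trump against $d(Y+Y')=0$ (the paper substitutes $X$ for $X'$ rather than the reverse, which is immaterial). Your closing remark about extending Observation \ref{O:identical} to enhancements of the resmoothed states $y'$ of $D_y$ is a careful treatment of a point the paper applies implicitly, and it is handled correctly.
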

\begin{proof}
Since $X$ and $X'$ are identical away from $p$, Observation \ref{O:identical} implies that $X\tensor[]{*}{_\diamondsuit}dY'=X'\tensor[]{*}{_\diamondsuit}dY'$.  
This and Proposition \ref{P:trumpPlumb} now yield:
\begin{align*}
\pushQED{\qed} {d} \big({X{*}Y}+{X'{*}Y'}\big)
&={d} X \tensor[_\diamondsuit]{*}{}Y+{d} X' \tensor[_\diamondsuit]{*}{}Y'+\left(-1\right)^{| \raisebox{-.02in}{\includegraphics[width=.1in]{ASmooth.pdf}}|_x}
\left( X\tensor[]{*}{_\diamondsuit}{d} Y+X'\tensor[]{*}{_\diamondsuit}{d} Y'\right)\\
&=\left(-1\right)^{| \raisebox{-.02in}{\includegraphics[width=.1in]{ASmooth.pdf}}|_x}
\left(X\tensor[]{*}{_\diamondsuit} {d} Y+X\tensor[]{*}{_\diamondsuit} {d} Y'\right)\\
&=\left(-1\right)^{| \raisebox{-.02in}{\includegraphics[width=.1in]{ASmooth.pdf}}|_x}
\left(X\tensor[]{*}{_\diamondsuit} {d} (Y+Y')\right)=0. \qedhere \\
\end{align*}
\end{proof}
\begin{figure}
\begin{center}
\includegraphics[width=6.5in]{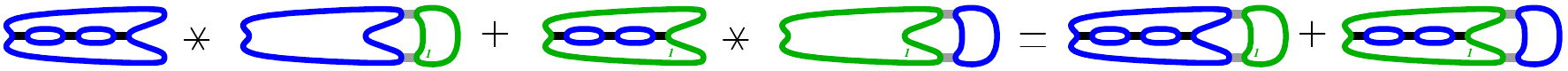}
\caption{
If $x{*}y$ is a plumbing of states and $X$, $X'$, $Y+Y'$  are cycles, with $X,X'\in \Cs_R(x)$ identical away from $x\cap y$ and  $X*Y+X'*Y'\in \Cs_R(x*y)$, then $X*Y+X'*Y'$  is also a cycle (cf Proposition \ref{P:plumbCycle}). 
}
\label{Fi:InductivePlumbing}
\end{center}
\end{figure}
\begin{obs}\label{O:identicalB}
If $x*\bigcirc =x$ is a plumbing by $\kappa$ as above, 
with $\kappa^{-1}(\bigcirc)\cap x_{\color{gray}A\color{black}}=\varnothing$ and $X\in \Cs_{R}(x)$, then:
\[{d} (X\tensor[]{*}{_\diamondsuit}\color{ForestGreen}\bigcirc\color{black})={d} X\tensor[]{*}{_\diamondsuit}\color{ForestGreen}\bigcirc\color{black}\hspace{1in}\text{and}\hspace{1in}{d} (X\tensor[]{*}{_\diamondsuit}\color{NavyBlue}\bigcirc\color{black})={d} X\tensor[]{*}{_\diamondsuit}\color{NavyBlue}\bigcirc\color{black}.\]
In particular, if $X$ is a cycle with $\kappa^{-1}(\bigcirc)\cap x_{\color{gray}A\color{black}}=\varnothing$, then $X\tensor[]{*}{_\diamondsuit}\color{ForestGreen}\bigcirc\color{black}$, $X\tensor[]{*}{_\diamondsuit}\color{NavyBlue}\bigcirc\color{black}$ are also cycles.  
\end{obs}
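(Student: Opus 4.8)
The plan is to unwind the two right--trump plumbings into elementary relabeling operators and then to verify, crossing by crossing, that $d$ leaves the relevant label alone. Write $z_0:=\kappa^{-1}(\bigcirc)$ for the state circle of $x$ that contains $p$; the hypothesis is $z_0\cap x_{\color{gray}A\color{black}}=\varnothing$. Because $x*\bigcirc=x$, for any enhancement $X$ of $x$ the chain $X\tensor[]{*}{_\diamondsuit}\color{ForestGreen}\bigcirc\color{black}$ is nothing but $X$ with the label on $z_0$ overwritten by $\color{ForestGreen}1\color{black}$, and $X\tensor[]{*}{_\diamondsuit}\color{NavyBlue}\bigcirc\color{black}$ overwrites it by $\color{NavyBlue}0\color{black}$; extending $R$--linearly (legitimate since $d$ and both plumbings are $R$--linear, so it suffices to argue on enhancements), $\tensor[]{*}{_\diamondsuit}\color{ForestGreen}\bigcirc\color{black}$ and $\tensor[]{*}{_\diamondsuit}\color{NavyBlue}\bigcirc\color{black}$ are just the ``relabel $z_0$'' operators. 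So both displayed identities reduce to the single claim that relabeling $z_0$ commutes with $d$, i.e. that the differential never changes the label on $z_0$.

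First I would establish this at a single crossing $c^t$ of $D_x$, on an enhancement $X$ of $x$. If $c^t$ is $B$--smoothed, then $d_{c^t}X=0$ and there is nothing to check. If $c^t$ is $A$--smoothed, then the crossing arc at $c^t$ is an $\color{gray}A\color{black}$--arc, so it and the state circle(s) it meets all lie in $x_{\color{gray}A\color{black}}$, hence are disjoint from $z_0$; in fact nothing of $x$ inside the crossing ball $C^t$ belongs to $z_0$, so $z_0$ is untouched when $c^t$ is re--smoothed and persists as a state circle of the resolved state. By the incidence rules (Figure \ref{Fi:incidence}), $d_{c^t}$ only merges or splits the one or two circles meeting $c^t$ and leaves every other circle's label as it was; in particular each enhancement occurring in $d_{c^t}X$ assigns $z_0$ exactly the label $X$ did. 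Thus $d_{c^t}$ commutes with relabeling $z_0$. Since the sign $\left(-1\right)^{| \raisebox{-.02in}{\includegraphics[width=.1in]{ASmooth.pdf}}|_X^t}$ in the formula for $d$ depends only on the underlying state $x$, which $X$ and its relabeling share, summing these over $t$ yields the two displayed identities; this is exactly the term--by--term computation used to prove Proposition \ref{P:trumpPlumb}. The ``in particular'' is then immediate: if $dX=0$, then $d(X\tensor[]{*}{_\diamondsuit}\color{ForestGreen}\bigcirc\color{black})=dX\tensor[]{*}{_\diamondsuit}\color{ForestGreen}\bigcirc\color{black}=0$, and likewise for $\color{NavyBlue}\bigcirc\color{black}$.

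The one place to be careful---and the reason the hypothesis $z_0\cap x_{\color{gray}A\color{black}}=\varnothing$ is present---is giving meaning to the right--hand sides $dX\tensor[]{*}{_\diamondsuit}\color{ForestGreen}\bigcirc\color{black}$ and $dX\tensor[]{*}{_\diamondsuit}\color{NavyBlue}\bigcirc\color{black}$: the chain $dX$ is supported on states obtained from $x$ by switching some $A$--smoothings to $B$--smoothings, and one needs $z_0$ to persist as a state circle in all of them for ``relabel $z_0$'' (equivalently, right--trump plumbing with $\bigcirc$) to be defined there. But that persistence is precisely what the crossing analysis above shows, so no genuine obstacle remains: the content of the statement is simply that a state circle avoiding $x_{\color{gray}A\color{black}}$ is a spectator for the Khovanov differential.
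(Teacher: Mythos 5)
Your proof is correct and takes essentially the same approach as the paper, whose entire justification is the one-sentence remark following the Observation: since $\kappa^{-1}(\bigcirc)$ is incident to no $A$--type crossing arcs, every enhanced state appearing in ${d}X$ still contains that circle and assigns it the same label that $X$ does, so relabeling it commutes with the differential. Your crossing-by-crossing check and your note that this persistence is what makes the right-hand sides well defined simply spell out that same point in more detail.
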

The point is that, because the state circle $\kappa^{-1}(\bigcirc)$ is incident to no $\color{gray}A\color{black}$--type crossing arcs, every enhanced state $Y$ with $ \pi_{RY}\circ {d} X\neq 0$ contains $\kappa^{-1}(\bigcirc)$ and assigns it the same label that $X$ does. 
\begin{prop}\label{P:plumbCycleGen}
Let $z=x
{*}y$ be a plumbing of states such that 
$x\cap y\subset y\setminus y_{\color{gray}A\color{black}}$.
If $X$, $Y$ enhance $x$, $y$ such that both $\text{tr}_R{X}$ and $\text{tr}_R{Y}$ are cycles and $X*Y$ is defined, then $\text{tr}_R(X*Y)$ is also a cycle.
\end{prop}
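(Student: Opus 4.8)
The plan is to combine the trump-plumbing identity (Proposition~\ref{P:trumpPlumb}) with a careful bookkeeping of what an $\color{gray}A\color{black}$--trace looks like, and then reduce to Proposition~\ref{P:plumbCycle} by inserting a trivial intermediate plumbing factor at the gluing circle. First I would observe that, by hypothesis, the gluing circle $z_0=x\cap y$ lies in $y\setminus y_{\color{gray}A\color{black}}$, so it is incident to no $\color{gray}A\color{black}$--type crossing arc on the $y$--side; hence its label is unchanged under every $\sim_{\color{gray}A\color{black}}$--move applied within $y$. This means $\text{tr}_R Y$ decomposes as a sum $\sum_r \text{sgn}\cdot Y_r$ of enhanced states, all of which assign $z_0$ the \emph{same} label $\ell\in\{\color{ForestGreen}1\color{black},\color{NavyBlue}0\color{black}\}$ as $Y$ does (here I use the $R=\Z$ sign conventions from \textsection\ref{S:block}, with the $R=\F_2$ case being the same statement with all signs $1$). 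Consequently $\text{tr}_R Y$ lies entirely in $\Cs_{R,p\to\ell}(y)$, and likewise we may assume $\text{tr}_R X$ lies in $\Cs_{R,p\to\ell}(x)$ — indeed for $X*Y$ to be defined we need $X$ and $Y$ to agree at $z_0$, so $X$ assigns $z_0$ the label $\ell$, and the only subtlety is whether every term of $\text{tr}_R X$ does too.

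The key structural point, then, is an identity of the form
\[
\text{tr}_R(X*Y)=\sum_{r,s}\text{sgn}(X\to X_r)\,\text{sgn}(Y\to Y_s)\;X_r* Y_s
=\bigl(\text{tr}_R X\bigr)*\bigl(\text{tr}_R Y\bigr),
\]
where the right-hand side makes sense because all $X_r$, $Y_s$ agree at $z_0$ (with label $\ell$), and where I must check that the $\sim_{\color{gray}A\color{black}}$--classes and signs multiply correctly under plumbing. This is where the homogeneity of the blocks and the indexing convention of \textsection\ref{S:construction} enter: an $\color{gray}A\color{black}$--zone of $z$ is either an $\color{gray}A\color{black}$--zone of $x$ or one of $y$ (it cannot straddle $z_0$ since $z_0\not\subset y_{\color{gray}A\color{black}}$ and, on the $x$--side, $z_0$ may touch $\color{gray}A\color{black}$--arcs but any such zone is then entirely within $x$), so the set of $\raisebox{-.02in}{\includegraphics[width=.125in]{KhovAEquiv1.pdf}}\leftrightarrow\raisebox{-.02in}{\includegraphics[width=.125in]{KhovAEquiv2.pdf}}$ moves available to $X*Y$ is exactly the disjoint union of those available to $X$ and to $Y$, and the crossing-order convention makes the signs multiply. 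Granting this identity, the proposition reduces to: if $\text{tr}_R X\in\Cs_{R,p\to\ell}(x)$ and $\text{tr}_R Y\in\Cs_{R,p\to\ell}(y)$ are cycles, then $(\text{tr}_R X)*(\text{tr}_R Y)$ is a cycle. For $\ell=\color{ForestGreen}1\color{black}$ this follows directly from the $R=\F_2$ connect-sum-type Leibniz rule together with Proposition~\ref{P:trumpPlumb}; for general $\ell$, apply Proposition~\ref{P:plumbCycle} with $X'=\text{tr}_R X$ and $Y'=0$ when $\ell=\color{ForestGreen}1\color{black}$, or $X=0$ and $Y=0$ on the ``$1$'' side when $\ell=\color{NavyBlue}0\color{black}$; more cleanly, since $z_0\cap y_{\color{gray}A\color{black}}=\varnothing$, Observation~\ref{O:identicalB} applied on the $y$--side shows $X\tensor[]{*}{_\diamondsuit}d(\text{tr}_R Y)=d(\text{tr}_R Y)$-compatible, and Proposition~\ref{P:trumpPlumb} gives $d((\text{tr}_R X)*(\text{tr}_R Y))=d(\text{tr}_R X)\tensor[_\diamondsuit]{*}{}(\text{tr}_R Y)\pm(\text{tr}_R X)\tensor[]{*}{_\diamondsuit}d(\text{tr}_R Y)=0$ since both traces are cycles.

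The main obstacle I expect is the sign/trace bookkeeping in the identity $\text{tr}_R(X*Y)=(\text{tr}_R X)*(\text{tr}_R Y)$ over $R=\Z$ — specifically, verifying that the definition of $\text{sgn}(X\to Y)$ via parity of the number of $\sim_{\color{gray}A\color{black}}$--moves is well-defined and multiplicative under the plumbing, and that the ``every non-bipartite $\color{gray}A\color{black}$--zone is all--$\color{ForestGreen}1\color{black}$'' proviso needed to define $\text{tr}_\Z$ is inherited correctly by $x$, $y$ from $z$ (and vice versa). This is genuinely a matter of the combinatorics of the zones rather than of the differential, so I would isolate it as a lemma about traces and plumbing, prove it first, and only then invoke Proposition~\ref{P:trumpPlumb}; the $R=\F_2$ case of the proposition is then immediate and sign-free and can be stated as the substantive content, with the $\Z$ case following once the trace lemma is in hand. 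Over $\F_2$ the argument is clean: $\text{tr}_{\F_2}(X*Y)=\sum_{X_r\sim_{\color{gray}A\color{black}}X,\,Y_s\sim_{\color{gray}A\color{black}}Y} X_r*Y_s=(\text{tr}_{\F_2}X)*(\text{tr}_{\F_2}Y)$ by the disjointness of the move-sets, and then $d$ of this vanishes by the $\F_2$ Leibniz identity and the cycle hypotheses.
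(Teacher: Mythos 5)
Your reduction hinges on the claim that every term of $\text{tr}_R X$ assigns the gluing circle the same label as $X$ does, so that $\text{tr}_R X\subset \Cs_{R,p\to\ell}(x)$ and $\text{tr}_R(X*Y)=(\text{tr}_R X)*(\text{tr}_R Y)$. That is exactly the point that fails, and it is the case the proposition exists to handle. The hypothesis $x\cap y\subset y\setminus y_A$ only forbids $A$--arcs at the gluing circle on the $y$--side; on the $x$--side the gluing circle may well lie in an $A$--zone of $x$ (this is the situation in the inductive proof of the main theorem, e.g.\ the plumbings $X_1*X_2$ and Figure \ref{Fi:InductivePlumbing}). If that $A$--zone of $x$ carries a $0$--labeled circle, the $\sim_A$--moves migrate the $0$ onto the gluing circle, so $\text{tr}_R X$ contains terms with \emph{both} labels there; then $(\text{tr}_R X)*(\text{tr}_R Y)$ is not even defined, and the ``clean'' $\F_2$ identity you state is false. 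You flag this as ``the only subtlety'' but never resolve it, and your final Leibniz computation silently assumes it away.

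The correct handling is precisely the extra structure the paper builds: split $\text{tr}_R X=X'+X''$ according to the label at $p$, and introduce \emph{both} relabelings $Y'$, $Y''$ of $Y$ at the circle $y_1=x\cap y$ (one of which is $Y$). Because $y_1\cap y_A=\varnothing$, Observation \ref{O:identicalB} shows that the relabeled trace is again a cycle, so both $\text{tr}_R Y'$ and $\text{tr}_R Y''$ are cycles and are identical away from $y_1$; the correct identity is $\text{tr}_R(X*Y)=X'*\text{tr}_R Y''+X''*\text{tr}_R Y'$ (matching labels at $y_1$), not a single product of traces. Applying Proposition \ref{P:trumpPlumb} to this sum, the terms $X'\tensor[]{*}{_\diamondsuit}d(\text{tr}_R Y'')$ and $X''\tensor[]{*}{_\diamondsuit}d(\text{tr}_R Y')$ vanish because the relabeled traces are cycles, while $dX'\tensor[_\diamondsuit]{*}{}\text{tr}_R Y''$ and $dX''\tensor[_\diamondsuit]{*}{}\text{tr}_R Y'$ cancel against each other: the left trump erases the only difference between $\text{tr}_R Y''$ and $\text{tr}_R Y'$, and $dX'=-dX''$ since $d(\text{tr}_R X)=0$ (note $X'$, $X''$ need not be cycles individually, so one genuinely needs this cancellation, as in Proposition \ref{P:plumbCycle}). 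In the special case where the gluing circle also meets no $A$--arcs of $x$, your argument is fine, but that case does not suffice for the applications.
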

\begin{proof}
Use the indexing from \textsection\ref{S:construction}, so that the state circles in $z$ from $x$ precede those from $y$, with $x\cap y=x_{|\bigcirc|_x}=y_1$.  
The cycle condition on $\text{tr}_R{X}$, $\text{tr}_R{Y}$ implies that each component of $x_{\color{gray}A\color{black}}$, $y_{\color{gray}A\color{black}}$ is adequate and contains at most one \color{NavyBlue}0\color{black}--labeled circle, by Observation \ref{O:generic}.  Write $X=\bigotimes_{r=1}^{|\bigcirc|_x} q^{a_r}$ and $Y=\bigotimes_{r=1}^{|\bigcirc|_{y}} q^{b_r}$. 
Let
$Y':=
1\otimes \bigotimes_{r=2}^{|\bigcirc|_{y}} q^{b_r}$ and 
$Y'':=q\otimes \bigotimes_{r=2}^{|\bigcirc|_{y}} q^{b_r}$, so that $Y'$, $Y''$ are identical away from $y_1$, and one of $Y'$, $Y''$ equals $Y$.  Thus, one of 
 one of 
$\text{tr}_R{Y'}$,  $\text{tr}_R{Y''}$ 
equals $\text{tr}_R{Y}$, which is a cycle; the assumption that $y_1\subset y\setminus y_{\color{gray}A\color{black}}$ implies that the other is also a cycle, by Observation \ref{O:identicalB}. 
 Write $\text{tr}_R{X}=X'+X''$, where $X'\in \Cs_{R,\color{ForestGreen}p\to1\color{black}}(x)$ and  $X''\in \Cs_{R,\color{NavyBlue}p\to0\color{black}}(x)$ with $p\in x_{|\bigcirc|_x}$. 
 Proposition \ref{P:plumbCycle} now implies that $\text{tr}_R(X*Y)=X'{*} \text{tr}_R{Y'}+{X''}{*} \text{tr}_R{Y''}$ is a cycle.
\end{proof}
\subsection{Boundaries}
Consider the following chains from Figure \ref{Fi:TrefGridJones}:
\[\raisebox{-.1in}{\includegraphics[height=.3in]{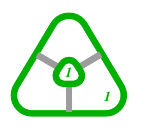}},\hspace{.25in}
\raisebox{-.1in}{\includegraphics[height=.3in]{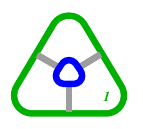}}-\raisebox{-.1in}{\includegraphics[height=.3in]{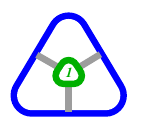}},\hspace{.25in} 
 \raisebox{-.1in}{\includegraphics[height=.3in]{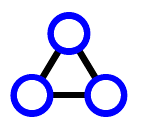}},\hspace{.25in}
 \raisebox{-.1in}{\includegraphics[height=.3in]{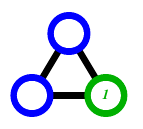}}.\]
All four are closed; are they exact? The first three cannot be exact since their $B$--type crossing arcs, if there are any, join distinct \color{NavyBlue}0\color{black}--labeled circles; this holds over both $R=\F_2,\Z$. 
 To see that $X:=\raisebox{-.06in}{\includegraphics[height=.2in]{Tref11q.pdf}}$ is not exact over $\F_2$, $\Z$, apply Proposition \ref{P:1} and the homogeneity of $x:=\raisebox{-.06in}{\includegraphics[height=.2in]{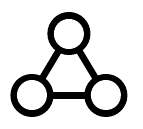}}$ 
to see that
$\left[X\right]_{\color{gray}A\color{black}}\cap\left[X\right]_B=\{X\}$. 
Since each $B$--type crossing arc in $X$ (and in its two $B$--equivalent enhanced states) joins distinct state circles, at most one of them labeled \color{ForestGreen}1\color{black}, the image of the map $\varepsilon\circ\pi_{[X]_B}\circ {d}$ is in $2R$.   This implies that $X$ cannot be exact:
 \begin{prop}\label{P:inexact}
 If $X$ enhances a state $x$ of a diagram $D$ such that $[X]_A\cap [X]_B=\{X\}$ (eg if $x$ is homogeneous), and if $\varepsilon\circ\pi_{[X]_B}\circ d:\Cs_R(D)\to 2R$ with $2R\subsetneqq R$, then $\text{tr}_RX$ is not exact.
 \end{prop}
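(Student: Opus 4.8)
The plan is to show the contrapositive in a suitably quantitative form: if $\text{tr}_RX$ were exact, then applying the composite linear map $\varepsilon\circ\pi_{[X]_B}$ to a chain whose boundary is $\text{tr}_RX$ would force the coefficient of $X$ in $\text{tr}_RX$ to lie in $2R$, contradicting the fact that $X$ appears in $\text{tr}_RX$ with coefficient $\pm 1$ (a unit). So first I would record the key fact that $X$ itself is a summand of $\text{tr}_RX$ with unit coefficient; this is immediate from the definition of $\text{tr}_R$ in \textsection\ref{S:block}, since $X\sim_{\color{gray}A\color{black}}X$ and $\text{sgn}(X\to X)=1$.

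Next, suppose $\text{tr}_RX=dW$ for some $W\in\Cs_R(D)$. I would apply the composite $\varepsilon\circ\pi_{[X]_B}$ to both sides. On the right we get $\varepsilon\circ\pi_{[X]_B}\circ d(W)$, which by hypothesis lies in $2R$. On the left we get $\varepsilon\circ\pi_{[X]_B}(\text{tr}_RX)$, and the plan is to evaluate this directly: $\pi_{[X]_B}$ projects onto the $R$-span of the $B$-equivalence class $[X]_B$, so $\pi_{[X]_B}(\text{tr}_RX)$ retains exactly those summands $Y$ of $\text{tr}_RX$ (with $Y\sim_{\color{gray}A\color{black}}X$) that also satisfy $Y\in[X]_B$, i.e. $Y\in[X]_{\color{gray}A\color{black}}\cap[X]_B$. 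By the hypothesis $[X]_A\cap[X]_B=\{X\}$ (which holds when $x$ is homogeneous by Proposition \ref{P:1}), this intersection is just $\{X\}$, so $\pi_{[X]_B}(\text{tr}_RX)$ is a unit multiple of $X$, and then $\varepsilon$ sends it to that same unit of $R$. Thus a unit of $R$ lies in $2R$, forcing $2R=R$, contrary to the standing assumption $2R\subsetneqq R$.

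The one subtlety I would be careful about — and I expect it to be the main (if modest) obstacle — is making sure the projection $\pi_{[X]_B}$ is legitimately defined, which requires $[X]_B$ to be primitive; since $[X]_B$ is a collection of enhanced states, it is primitive by the remark following the definition of primitive subsets in \textsection\ref{S:maps}, so $\pi_{[X]_B}$ is well defined and $R$-linear, and the argument goes through. I would also note in passing that the hypothesis $\varepsilon\circ\pi_{[X]_B}\circ d:\Cs_R(D)\to 2R$ is exactly what is needed and should be stated as a hypothesis rather than verified here (its verification in the trefoil example used the fact that each $B$-arc joins distinct state circles with at most one $\color{ForestGreen}1\color{black}$-label, so that the incidence rule $1\mapsto q\otimes1+1\otimes q$ contributes terms in pairs). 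Everything else is a two-line linearity computation, so the proof is short.

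\begin{proof}
Since $[X]_B$ is a collection of enhanced states, it is primitive (cf \textsection\ref{S:maps}), so $\pi_{[X]_B}$ is a well-defined $R$--linear map. By the definition of $\text{tr}_R$, the enhanced state $X$ occurs in $\text{tr}_RX$ with coefficient $\text{sgn}(X\to X)=1$, a unit of $R$. The summands of $\text{tr}_RX$ are, by definition, (signed) enhanced states $Y$ with $Y\sim_{\color{gray}A\color{black}}X$, so $\pi_{[X]_B}(\text{tr}_RX)$ is the (signed) sum of those $Y$ which in addition satisfy $Y\in [X]_B$; that is, it is supported on $[X]_{\color{gray}A\color{black}}\cap [X]_B=\{X\}$. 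Hence $\pi_{[X]_B}(\text{tr}_RX)=\pm X$, and so $\varepsilon\circ\pi_{[X]_B}(\text{tr}_RX)=\pm 1\in R^\times$.

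Now suppose for contradiction that $\text{tr}_RX=dW$ for some $W\in \Cs_R(D)$. Applying the $R$--linear map $\varepsilon\circ\pi_{[X]_B}$ and using the hypothesis that $\varepsilon\circ\pi_{[X]_B}\circ d$ has image in $2R$, we obtain
\[
\pm 1=\varepsilon\circ\pi_{[X]_B}(\text{tr}_RX)=\varepsilon\circ\pi_{[X]_B}\circ d(W)\in 2R.
\]
Thus $2R$ contains a unit, so $2R=R$, contradicting $2R\subsetneqq R$. Therefore $\text{tr}_RX$ is not exact.
\end{proof}
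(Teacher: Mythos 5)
Your proposal is correct and is essentially the paper's own argument: apply $\varepsilon\circ\pi_{[X]_B}$ to $\text{tr}_RX=dW$, use $[X]_{\color{gray}A\color{black}}\cap[X]_B=\{X\}$ to evaluate the left side as $\varepsilon(X)=1$, and conclude that a unit lies in $2R$, contradicting $2R\subsetneqq R$. The extra remarks (primitivity of $[X]_B$, the sign $\text{sgn}(X\to X)=1$) are fine but only make explicit what the paper leaves implicit.
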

 \begin{proof}
If $\text{tr}_RX$ were exact, say $\text{tr}_RX= {d} Y$, $Y\in \Cs_R(D)$, then 2 would be a unit in $R$, contrary to assumption:
\[\pushQED{\qed}1=\varepsilon(X)= \varepsilon\circ\pi_{[X]_B}(\text{tr}_RX) =\varepsilon\circ\pi_{[X]_B}\circ{d} (Y)\in 2R.\\ \qedhere \]
\end{proof}
Thus, $\text{tr}_RX=X=\raisebox{-.06in}{\includegraphics[height=.2in]{Tref11q.pdf}}$ is not exact because $[X]_A\cap [X]_B=\{X\}$ and $\varepsilon\circ\pi_{[X]_B}\circ {d}:\Cs_R(D)\to 2R$.  
Plumbing preserves homogeneity, which implies the former property.  Plumbing also preserves the latter property:
 \begin{prop}\label{P:plumb2R}
If $X*Y$ is a plumbing of chains, where $X$, $Y$ enhance $x$, $y$ with $\varepsilon\circ\pi_{[X]_B}\circ {d}: \Cs_R(D_x)\to 2R$, $\varepsilon\circ\pi_{[Y]_B}\circ {d}: \Cs_R(D_y)\to 2R$, and if $x*y=z$ is a plumbing of states, then there are $X'\in [X]_B$, $Y'\in [Y]_B$ such that $X'*Y'=:Z$ satisfies  $\varepsilon\circ\pi_{[Z]_B}\circ {d}: \Cs_R(D_z)\to 2R$.
 \end{prop}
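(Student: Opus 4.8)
The plan is to translate the hypothesis ``$\varepsilon\circ\pi_{[X]_B}\circ d\colon\Cs_R(D_x)\to 2R$'' into a transparent combinatorial condition on the $B$--zones of $x$, and likewise for $y$ and $z$. First I would identify, for each enhanced state $W$ of a diagram $D$, the value $\varepsilon\circ\pi_{[X]_B}\circ d(W)\in R$. Since every member of $[X]_B$ enhances the single state $x$, a summand of $dW$ can lie in $[X]_B$ only when the state underlying $W$ is obtained from $x$ by reopening exactly one $B$--crossing $c$ to an $A$--smoothing (flipping a second crossing would carry the underlying state off $x$). A short computation with the incidence rules of Figure~\ref{Fi:incidence} then shows that for such $W$ the value $\varepsilon\circ\pi_{[X]_B}\circ d(W)$ is a multiple of $2$ --- it comes from splitting a $1$--labeled circle into a $0$-- and a $1$--labeled pair, which produces two $\sim_B$--related enhanced states with a common sign --- except when $c$ joins a state circle to itself, or $c$ joins two distinct state circles that are both $0$--labeled in some member of $[X]_B$, in which cases $\varepsilon\circ\pi_{[X]_B}\circ d(W)$ can be $\pm 1$. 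Because the $\sim_B$ moves realize every placement of the $0$--labels within a connected $B$--zone, it follows (assuming $2R\subsetneq R$, else the proposition is vacuous) that $\varepsilon\circ\pi_{[X]_B}\circ d$ lands in $2R$ if and only if every $B$--arc of $x$ joins distinct state circles and each $B$--zone of $x$ carries at most one $0$--labeled circle in $X$. Call this condition $(\dagger_X)$. It depends only on the class $[X]_B$, so it is unaffected by replacing $X$ with any $X'\in[X]_B$.

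Granting the equivalence, I would reduce the proposition to a count of $0$--labels in a single zone. The plumbing $z=x*y$ glues $x$ and $y$ along one state circle $z_{t_0}=x_{r_0}=y_{s_0}$ and identifies no other pair of circles; hence every $B$--arc of $z$ is a $B$--arc of $x$ or of $y$ and still joins distinct state circles, and the $B$--zones of $z$ are exactly those of $x$ and those of $y$, with the sole modification that when $z_{t_0}$ meets $B$--arcs from both sides the two zones through it merge into one zone $\zeta_z$. So, for any $X'\in[X]_B$ and $Y'\in[Y]_B$ whose labels at $z_{t_0}$ agree, the chain $X'*Y'$ is defined, and $(\dagger_{X'*Y'})$ holds as soon as the $B$--zone of $z$ containing $z_{t_0}$, if there is one, carries at most one $0$--labeled circle in $X'*Y'$: every other $B$--zone of $z$ is literally a $B$--zone of $x$ or of $y$, where $(\dagger_{X'})=(\dagger_X)$ or $(\dagger_{Y'})=(\dagger_Y)$ already applies.

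Finally I would choose $X'$ and $Y'$ explicitly. Since $X*Y$ is (by hypothesis) defined, $x_{r_0}$ and $y_{s_0}$ carry a common label $a$ in $X$ and $Y$. If $z_{t_0}$ lies in a $B$--zone $\zeta_x$ of $x$ and a $B$--zone $\zeta_y$ of $y$, each carrying exactly one $0$, and $a=1$, then let $X'$, $Y'$ be obtained from $X$, $Y$ by sliding the unique $0$ of $\zeta_x$ onto $x_{r_0}$ and the unique $0$ of $\zeta_y$ onto $y_{s_0}$; then $X'\sim_B X$, $Y'\sim_B Y$, the labels at $z_{t_0}$ now agree (both $0$), and $\zeta_z$ carries exactly the single $0$ at $z_{t_0}$. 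In every remaining case --- $z_{t_0}$ misses the $B$--arcs of $x$ or of $y$, or one of $\zeta_x$, $\zeta_y$ is all--$1$, or $a=0$ --- the choice $X'=X$, $Y'=Y$ already works: the number of $0$'s in the zone of $z$ through $z_{t_0}$ is $m_x+m_y-[\,a=0\,]$, where $m_x,m_y\in\{0,1\}$ are the numbers of $0$'s in $\zeta_x$, $\zeta_y$ and the bracket corrects for the identified circle $z_{t_0}$, and in each of these cases this number is at most $1$. Thus in all cases $(\dagger_{X'*Y'})$ holds, which is the proposition.

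The main obstacle is the opening step: bookkeeping exactly which $W$ contribute to $\varepsilon\circ\pi_{[X]_B}\circ d$ and with what parity --- especially pinning down the ``$\pm 2$'' that pushes the generic contributions into $2R$, and isolating precisely the two offending local patterns (a $B$--loop, and a $B$--arc between two $0$'s somewhere in $[X]_B$) --- and then converting ``no offending pattern occurs in $[X]_B$'' into the clean zone condition via label sliding. A secondary point requiring care is that $(\dagger_X)$ constrains nothing about a state circle lying in no $B$--zone; this is why one needs the hypothesis that $X*Y$ is a genuine (defined) plumbing of chains, which is exactly what rules out an unfixable label mismatch at $z_{t_0}$.
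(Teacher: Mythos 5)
Your overall architecture matches the paper's (translate the $2R$ hypothesis into a per--$B$--zone label count, note that plumbing only merges the two zones through $z_{t_0}$, then fix the representatives by sliding labels onto the shared circle), but your opening characterization has the labels inverted, and this breaks the proof. With the paper's conventions ($1$--labels correspond to $q$, $0$--labels to $1\in V$, and the splitting rules $q\mapsto q\otimes q$, $1\mapsto q\otimes 1+1\otimes q$ of Figure \ref{Fi:incidence}), the two--term (hence even) contribution to $\varepsilon\circ\pi_{[X]_B}\circ d$ comes from splitting a \emph{$0$--labeled} circle, while the dangerous single $\pm1$ contribution comes from $q\mapsto q\otimes q$, i.e.\ from a member of $[X]_B$ having two \emph{$1$--labeled} circles adjacent across a $B$--arc. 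So the correct zone condition is ``at most one $1$--labeled circle in each $B$--zone'' (this is exactly Proposition \ref{P:3} and the first line of the paper's proof of Proposition \ref{P:plumb2R}), not ``at most one $0$--labeled circle.'' Your condition $(\dagger_X)$ is genuinely false as an equivalence: for two circles joined by a single $B$--arc, the enhancement labeling both circles $1$ has no $0$--labels at all, yet the state $W$ with the crossing reopened and its single circle labeled $1$ satisfies $\varepsilon\circ\pi_{[X]_B}\circ d(W)=\pm1\notin 2R$.

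Because the invariant you track (number of $0$'s per $B$--zone) is the wrong one, the final choice of $X'$, $Y'$ also fails. Concretely, suppose $z_{t_0}$ lies in $B$--zones $\zeta_x$, $\zeta_y$ of both factors, the shared circle is labeled $0$ in $X$ and $Y$, and each of $\zeta_x$, $\zeta_y$ contains one $1$--labeled circle away from $z_{t_0}$. Your recipe keeps $X'=X$, $Y'=Y$, but then the merged $B$--zone of $z$ contains two $1$--labeled circles and $Z=X*Y$ violates the conclusion. The paper's fix in the problematic case is the mirror image of yours: when both zones contain a $1$--labeled circle, choose $X'\in[X]_B\cap\Cs_{R,p\to1}(x)$ and $Y'\in[Y]_B\cap\Cs_{R,p\to1}(y)$, i.e.\ slide the $1$'s onto the shared circle so the merged zone carries a single $1$. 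There is also a second, separate gap: after arranging the zone condition for $Z$ you assert the $2R$ property follows zone by zone, but one still has to check which enhanced states $W$ of $D_z$ can map into $[Z]_B$; the paper does this by showing any such $W$ is of the form $X''\tensor[_\diamondsuit]{*}{}Y'$ (or the symmetric case) and reducing $\varepsilon\circ\pi_{[Z]_B}\circ d(W)$ to $\varepsilon\circ\pi_{[X]_B}\circ d(X'')\in 2R$, rather than re-proving the characterization for $z$. If you correct the label count (and note that the $2R$ hypothesis also rules out $B$--arcs joining a circle to itself, as you did), your argument can be repaired along exactly these lines.
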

 \begin{proof}
Deduce from $\varepsilon\circ\pi_{[X]_B}\circ {d}: \Cs_R(D_x)\to 2R$ that $X$ has no more than one 
\color{ForestGreen}1\color{black}--labeled circle in any
 $B$--zone of $x$ (cf Figure \ref{Fi:incidence}); likewise for $Y$, $y$.
Let $z_0$ denote the state circle $x\cap y$, and let $p$ be a point on $z_0$ away from crossings.  Let $X'=X$ and $Y'=Y$, {\it unless}
$z_0$ is in $B$--zones of both $x$, $y$ such that both zones contain a $\color{ForestGreen}1\color{black}$--labeled circle. In that case, 
choose $X'\in [X]_B\cap \Cs_{R,\color{ForestGreen}p\to1\color{black}}(x)$ and $Y'\in [Y]_B\cap \Cs_{R,\color{ForestGreen}p\to1\color{black}}(y)$, so that $X'*Y'=:Z\in \Cs_{R,\color{ForestGreen}p\to1\color{black}}(z)$.  
In all cases, we have chosen $X'$, $Y'$ so that, if $z_0$ is in a $B$--zone of $z$, then this zone contains at most one $\color{ForestGreen}1\color{black}$--labeled circle in $Z$. 

We claim that these choices for $X'*Y'=:Z$ always satisfy  $\varepsilon\circ\pi_{[Z]_B}\circ {d}: \Cs_R(D_z)\to 2R$.
Suppose $W$ enhances a state $w$ of $D_z$ such that $\pi_{[Z]_B}\circ {d}(W)\neq 0$.  The state $w$ must differ from $z$ at a single crossing. Either this crossing is from $y$, and $w=x* y'$ for a state $y'$ of $D_y$; or the crossing is from $x$, and
$w=x'* y$ for a state $x'$ of $D_x$; \textsc{wlog} assume the latter.  Then, since the $B$--zone of $y$ containing $z_0$, if there is one, has no $\color{ForestGreen}1\color{black}$--labeled circles other than $z_0$, $W$ assigns each circle of $y\setminus z_0$ the same label that $Y'$ does; ie $W=X''\tensor[_\diamondsuit]{{*}}{}Y'$ for some enhancement $X''$ of $x'$. Thus, $
\pi_{[Z]_B}\circ {d}(X''\tensor[_\diamondsuit]{{*}}{}Y')=\pi_{[Z]_B}\left( {dX''}\tensor[_\diamondsuit]{{*}}{}Y'\right)$, giving:
\[\pushQED{\qed}\varepsilon\circ\pi_{[Z]_B}\circ {d}(W)
=\varepsilon\circ\pi_{[Z]_B}\left( {dX''}\tensor[_\diamondsuit]{{*}}{}Y'\right)
=
\varepsilon\circ\pi_{[X]_B}\circ d(X'')
\in 2R.\qedhere\]
 \end{proof}
\subsection{Inductive proof of the main theorem}
Two examples will show how plumbing is used to build up the main theorem's nonzero cycles.  First, with either $R=\F_2$ or $R=\Z$, consider: 
\[X_1=\raisebox{-15pt}{\includegraphics[height=35pt]{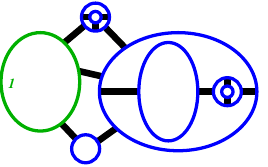}}
, ~X_2=\raisebox{-9pt}{\includegraphics[height=25pt]{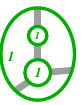}}
, ~X_3=\raisebox{-9pt}{\includegraphics[height=23pt]{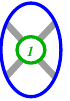}}
.\]
Each of $\text{tr}_R X_1=X_1$, $\text{tr}_R X_2=X_2$, $\text{tr}_R X_3-\raisebox{-6pt}{\includegraphics[height=18pt]{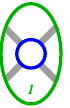}}
$
is a cycle; also each $[X_r]_A\cap[X_r]_B=\{X_r\}$,  
and $\varepsilon\circ\pi_{[X_r]_B}\circ d$ maps to $2R$; Proposition \ref{P:inexact} implies that 
$\text{tr}_R X_1$, $\text{tr}_R X_2$, $\text{tr}_R X_3$ represent nonzero homology classes. 
Propositions \ref{P:plumbCycleGen}, \ref{P:plumb2R} further imply that
\[\text{tr}_R(X_1*X_2)=X_1*X_2=\raisebox{-15pt}{\includegraphics[height=35pt]{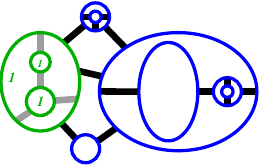}}\]
also represents a nonzero homology class, as does
\[\text{tr}_R\left(X_1*X_2*X_3\right)=
\raisebox{-15pt}{\includegraphics[height=35pt]{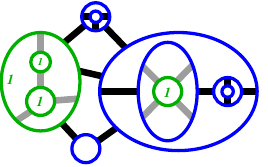}}-
\raisebox{-15pt}{\includegraphics[height=35pt]{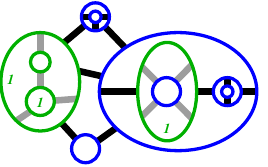}}.\]
While the previous example holds over both $\Z$, $\F_2$, the next example works over $\F_2$ only. Let
\[Y_1=\raisebox{-15pt}{\includegraphics[height=35pt]{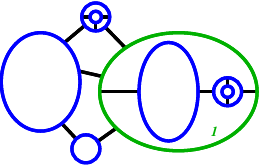}}
, ~Y_2=\raisebox{-9pt}{\includegraphics[height=25pt]{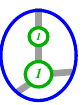}}
, ~Y_3=\raisebox{-9pt}{\includegraphics[height=23pt]{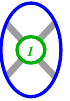}}
.\]
By the same reasoning as the last example,
$\text{tr}_{\F_2} Y_1=Y_1$, $\text{tr}_{\F_2} Y_2=Y_2+
\raisebox{-6pt}{\includegraphics[height=18pt]{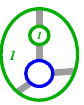}}+
\raisebox{-6pt}{\includegraphics[height=18pt]{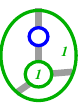}}
$, and 
$\text{tr}_{\F_2} Y_3=Y_3+
\raisebox{-6pt}{\includegraphics[height=18pt]{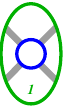}}
$ represent nonzero homology classes, as do
\[\text{tr}_{\F_2}(Y_1*Y_2)=\raisebox{-15pt}{\includegraphics[height=35pt]{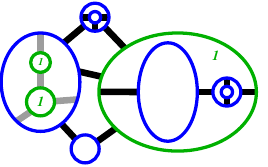}}+
\raisebox{-15pt}{\includegraphics[height=35pt]{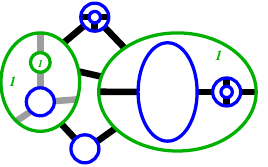}}+
\raisebox{-15pt}{\includegraphics[height=35pt]{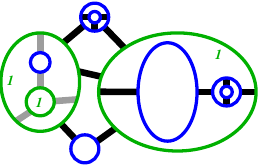}}\]
{ and}
\[\text{tr}_{\F_2}\left(Y_1*Y_2*Y_3\right)=
\raisebox{-15pt}{\includegraphics[height=35pt]{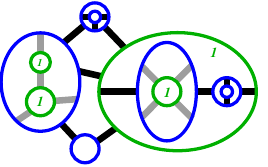}}+
\raisebox{-15pt}{\includegraphics[height=35pt]{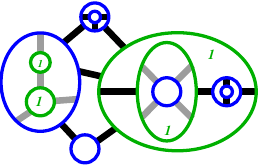}}+
\raisebox{-15pt}{\includegraphics[height=35pt]{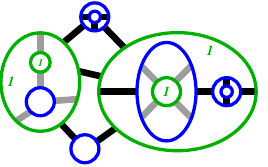}}+
\raisebox{-15pt}{\includegraphics[height=35pt]{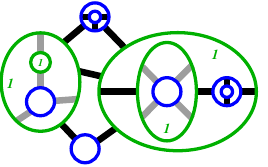}}+
\raisebox{-15pt}{\includegraphics[height=35pt]{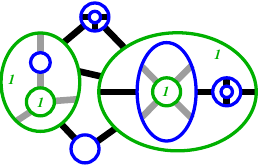}}+
\raisebox{-15pt}{\includegraphics[height=35pt]{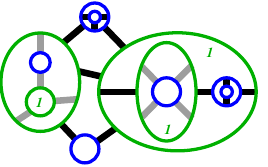}}
.\]

Both proofs of the main theorem will establish the following, which is stronger than the version from \textsection\ref{S:intro}. 
\begin{maintheorem}
If $z$ is a homogeneously adequate state, then for any point $p$ 
away from crossing arcs, $z$ has enhancements 
$Z^-\in \Cs_{R,\color{ForestGreen}p\to1\color{black}}(x)$, $Z^+\in \Cs_{R,\color{NavyBlue}p\to0\color{black}}(z)$,  
identical away from $p$, such that 
both $\text{tr}_{\F_2}{Z^\pm}$ represent nonzero homology classes. 
If also $G_{x_{\color{gray}A\color{black}}}$ is bipartite, then both $\text{tr}_{\Z}Z^\pm$ represent nonzero homology classes.
\end{maintheorem}
\begin{proof}
We argue by induction on the number of zones in $z$ that
$z$ has enhancements $Z^+\in \Cs_{R,\color{NavyBlue}p\to0\color{black}}(z)$, $Z^-\in \Cs_{R,\color{ForestGreen}p\to1\color{black}}(z)$, identical away from 
$p$, such that both {$\color{gray}A\color{black}$--trace}s $\text{tr}_{R}Z^\pm$ are cycles, and $2R$ contains the images of $\varepsilon\circ \pi_{[Z^\pm]_B}\circ {d}$. 
The last condition implies that neither $Z^\pm$ is exact, by Proposition \ref{P:inexact}. 

The base case checks out. For the inductive step, de-plumb $z=x*y$ by a gluing map ${g}:x\sqcup y\to z$ with ${g}(x_{r_0})={g}(y_{s_0})=z_{t_0}$, where either $x_{r_0}\subset x\setminus x_{\color{gray}A\color{black}}$ or $y_{s_0}\subset y\setminus y_{\color{gray}A\color{black}}$. 

If $p\in z_{t_0}$, then apply the inductive hypothesis to $x$ and $y$ to obtain $X^+\in \Cs_{R,\color{NavyBlue}p\to0\color{black}}(x)$, $X^-\in \Cs_{R,\color{ForestGreen}p\to1\color{black}}(x)$, identical away from 
$p$, and $Y^+\in \Cs_{R,\color{NavyBlue}p\to0\color{black}}(y)$, $Y^-\in \Cs_{R,\color{ForestGreen}p\to1\color{black}}(y)$, identical away from 
$p$, such that all four of $\text{tr}_RX^\pm$, $\text{tr}_RY^\pm$ are cycles, 
and such that $2R$ contains the images of all four of
$\varepsilon\circ \pi_{[X^\pm]_B}\circ {d}$, 
$\varepsilon\circ \pi_{[Y^\pm]_B}\circ {d}$.
Let $Z^-:=X^-{*}Y^-$, $Z^+:=X^+{*}Y^+$.  Then $Z^-$, $Z^+$ are identical away from $z_{t_0}$; $\text{tr}_R{Z^-}$, $\text{tr}_R{Z^+}$ are cycles; 
and 
$2R$ contains the images of
$\varepsilon\circ \pi_{[Z^-]_B}\circ {d}$, $\varepsilon\circ \pi_{[Z^+]_B}\circ {d}$.

Assume instead $p\notin z_{t_0}$; \textsc{wlog} $p\in x\setminus y$. 
Apply the inductive hypothesis to obtain $X^+\in \Cs_{R,\color{NavyBlue}p\to0\color{black}}(x)$, $X^-\in \Cs_{R,\color{ForestGreen}p\to1\color{black}}(x)$, identical away from $p$, 
such that $\text{tr}_RX^\pm$ are cycles 
and  $2R$ contains the images of  
$\varepsilon\circ \pi_{[X^\pm]_B}\circ {d}$. 
Also let $b$ be a point in $z_{t_0}$, and apply the inductive hypothesis to obtain $Y^+\in \Cs_{R,\color{NavyBlue}b\to0\color{black}}(y)$, $Y^-\in \Cs_{R,\color{ForestGreen}b\to1\color{black}}(y)$, identical away from $p$
, such that $\text{tr}_RY^\pm$ are cycles 
and  $2R$ contains the images of  
$\varepsilon\circ \pi_{[Y^\pm]_B}\circ {d}$. Since $X^\pm$ are identical away from $p$, 
they assign the same label to $z_{t_0}$. If this label is \color{ForestGreen}1\color{black}, then let $Z^+:=X^+*Y^+$ and $Z^-:=X^-*Y^+$; if it is \color{NavyBlue}0\color{black}, then define $Z^+:=X^+*Y^-$ and $Z^-:=X^-*Y^-$. Either way, $Z^\pm$ are identical away from $p$, $\text{tr}_R{Z^\pm}$ are cycles,
and $2R$ contains the images of  
$\varepsilon\circ \pi_{[Z^\pm]_B}\circ {d}$.
 \end{proof}

\section{Direct proof of the main theorem.}\label{S:direct}
Throughout this section, fix a homogeneously adequate state $x$ of a link diagram $D$. 
Here is the plan.  Several propositions will establish two conditions on enhancements $X$ of $x$ which together guarantee that $\text{tr}_{\F_2}{X}$ represents a nonzero homology class: each $\color{gray}A\color{black}$--zone must contain at most one \color{NavyBlue}0\color{black}--labeled circle, and each $B$--zone must contain at most one \color{ForestGreen}1\color{black}--labeled circle.  These conditions also suffice over $R=\Z$ when $G_{x_{\color{gray}A\color{black}}}$ is bipartite.  An explicit construction will then fashion enhancements $X^\pm$ of $x$ which satisfy these conditions, with $j(X^+)=j(X^-)+2$.   

\begin{prop}\label{P:2}
If $X$ enhances $x$ with at most one \color{NavyBlue}0\color{black}--labeled circle in each $\color{gray}A\color{black}$--zone, then ${d}\left(\text{tr}_{\F_2}{X}\right)=0$.  Further, ${d}\left(\text{tr}_{\Z}{X}\right)=0$ if every non-bipartite $\color{gray}A\color{black}$--zone of $X$ is all--\color{ForestGreen}1\color{black}.
\end{prop}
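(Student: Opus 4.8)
The plan is to analyze the differential term by term according to which crossing is resolved, separating crossings lying in $\color{gray}A\color{black}$--zones from those lying in $B$--zones, and to show that the contributions cancel in pairs. First I would fix an enhancement $X$ of $x$ with at most one $\color{NavyBlue}0\color{black}$--labeled circle in each $\color{gray}A\color{black}$--zone, and observe that every enhanced state $Y$ appearing in $\text{tr}_{\F_2}X=\sum_{Y\sim_{\color{gray}A\color{black}}X}Y$ has this same property, since the moves $\raisebox{-.02in}{\includegraphics[width=.1in]{ASmooth.pdf}}\leftrightarrow\raisebox{-.02in}{\includegraphics[width=.1in]{ASmooth.pdf}}$ generating $\sim_{\color{gray}A\color{black}}$ preserve the number of $\color{NavyBlue}0\color{black}$--labeled circles in each $\color{gray}A\color{black}$--zone (they merge two $\color{ForestGreen}1\color{black}$'s into a $\color{ForestGreen}1\color{black}$ and split a $\color{ForestGreen}1\color{black}$ into two $\color{ForestGreen}1\color{black}$'s, or the reverse, never changing the $\color{NavyBlue}0\color{black}$--count). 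So it suffices to understand $dY$ for each such $Y$ and show the sum over the $\sim_{\color{gray}A\color{black}}$--class vanishes.

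Next I would split $dY=\sum_{c^t}(-1)^{\cdots}d_{c^t}Y$ into the sum over crossings $c^t$ in $\color{gray}A\color{black}$--zones and the sum over crossings in $B$--zones. For a crossing $c^t$ in a $B$--zone, $x$ (hence $Y$) has a $B$--smoothing there, so $d_{c^t}Y=0$; thus only $\color{gray}A\color{black}$--zone crossings contribute. For such a crossing $c^t$, the $\color{gray}A\color{black}$--smoothing joins two distinct state circles within a single $\color{gray}A\color{black}$--zone (adequacy), and since that zone has at most one $\color{NavyBlue}0\color{black}$, at least one of the two circles is $\color{ForestGreen}1\color{black}$--labeled; applying the incidence rules (Figure \ref{Fi:incidence}), a merge $1\otimes q\mapsto q$ or $1\otimes 1\mapsto 1$ or $q\otimes 1\mapsto q$ occurs, and the resulting state $Y'$ again has at most one $\color{NavyBlue}0\color{black}$ per $\color{gray}A\color{black}$--zone. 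The key combinatorial point is that $Y'$ differs from $Y$ only by replacing a $B$--smoothing at $c^t$ by... wait, rather: $Y'$ has a $B$--smoothing at $c^t$ where $Y$ had an $\color{gray}A\color{black}$--smoothing, merging two circles of a single $\color{gray}A\color{black}$--zone into one. I would then show that the set of all such $(Y,c^t,Y')$ with $Y\sim_{\color{gray}A\color{black}}X$ is partitioned into pairs $\{(Y_1,c^t,Y'),(Y_2,c^t,Y')\}$ producing the same $Y'$, because the merged circle in $Y'$ can be re-split into two $\color{ForestGreen}1\color{black}$--labeled circles in exactly the state configuration recorded by the other $\color{gray}A\color{black}$--smoothings, and $Y_1\sim_{\color{gray}A\color{black}}Y_2$; over $\F_2$ these two contributions cancel, giving $d(\text{tr}_{\F_2}X)=0$.

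For the $\Z$ statement, the plan is to repeat the same pairing argument but track $\text{sgn}(X\to Y)$ and the Koszul sign $(-1)^{|\raisebox{-.02in}{\includegraphics[width=.1in]{ASmooth.pdf}}|_Y^t}$ carefully, checking that in each cancelling pair the total sign is opposite. Here the hypothesis that every non-bipartite $\color{gray}A\color{black}$--zone is all--$\color{ForestGreen}1\color{black}$ is what makes $\text{tr}_\Z X$ well-defined at all (per the definition in \textsection\ref{S:block}) and what guarantees the signs $\text{sgn}(X\to Y)$ are consistent around the cycles in $G_{x_{\color{gray}A\color{black}}}$; in a bipartite $\color{gray}A\color{black}$--zone the single $\color{NavyBlue}0\color{black}$--circle plays the role of a basepoint making all re-splitting paths sign-coherent. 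I expect the main obstacle to be exactly this sign bookkeeping: verifying that the re-splitting move relating $Y_1$ and $Y_2$ contributes a sign of $-1$ relative to the difference in Koszul signs, uniformly, and that no inconsistency arises from non-bipartite zones (which is precisely why those are required to be all--$\color{ForestGreen}1\color{black}$, so that no $\color{NavyBlue}0\color{black}$--circle ever needs to travel around an odd cycle). The $\F_2$ case is essentially a clean involution-on-a-set argument; the $\Z$ case needs the orientation/sign analysis to be pinned down.
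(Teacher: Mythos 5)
Your strategy is the same as the paper's: reduce to showing $d_{c}(\text{tr}_R X)=0$ one $A$--crossing at a time and cancel contributions in pairs of enhancements that differ only by swapping the labels on the two circles meeting $c$. But two points need repair before this is a proof. First, you have conflated the enhancement labels with the algebra elements: a $1$--labeled circle corresponds to $q$ and a $0$--labeled circle to the element $1$, so under your hypothesis the only merges that can occur at an $A$--crossing $c$ are $q\otimes q\mapsto 0$ (both circles $1$--labeled, contributing nothing) and $q\otimes 1,\,1\otimes q\mapsto q$ (exactly one $0$--label among the two circles). The merge $1\otimes 1\mapsto 1$ that you list is precisely the configuration the hypothesis forbids---two $0$--labeled circles in one $A$--zone---and it is the dangerous case, since its image has a unique preimage and would not cancel; meanwhile you omit $q\otimes q\mapsto 0$, which is the harmless case. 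Relatedly, the two members of a cancelling pair do not ``re-split into two $1$--labeled circles'': each carries one $1$--label and one $0$--label on the circles at $c$, in the two possible orders, and they are related by a single $\sim_{A}$ move across $c$ (this is also why your opening justification that the $\sim_A$ moves ``merge/split circles'' is off---the moves act on labels of a fixed state, not on circles). With these corrections your $\F_2$ argument coincides with the paper's.

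Second, the $\Z$ case, which you explicitly leave ``to be pinned down,'' is where the proposal stops short, and the obstacle you anticipate (comparing Koszul signs between the paired terms) is not actually there. Every term of $\text{tr}_{\Z}X$ has the same underlying state $x$, so the sign attached to $d_{c^t}$ in the definition of $d$ depends only on $x$ and factors out of $d_{c^t}(\text{tr}_{\Z}X)$. The only sign that varies within a cancelling pair $\{Y_1,Y_2\}$ is $\text{sgn}(X\to Y_i)$, and it flips because $Y_1$ and $Y_2$ differ by exactly one $\sim_{A}$ move; the hypothesis that every non-bipartite $A$--zone is all--$1$ is what makes this parity, hence $\text{sgn}$, well defined. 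Since $d_{c}Y_1=d_{c}Y_2$, each pair contributes $\text{sgn}(X\to Y_1)\left(d_{c}Y_1-d_{c}Y_2\right)=0$ over $\Z$ as well. That one observation is all that is missing from your sign analysis.
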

 \begin{proof}
Let $c$ be an arbitrary crossing of the link diagram ${D}$; it will suffice to show that ${d}_c\left(\text{tr}_{R}{X}\right)=0$ for $R=\F_2$, $R=\Z$. Assume that $x$ has an $\color{gray}A\color{black}$--type crossing arc at $c$, or else we are done. Partition the enhanced states in $[X]_{\color{gray}A\color{black}}$ as follows.  Let one equivalence class consist of all enhancements for which both state circles incident to $c$ are labeled \color{ForestGreen}1\color{black}; ${d}_c(X')=0$ for each $X'$ in this class.  Partition any remaining enhanced states in $[X]_{\color{gray}A\color{black}}$ into pairs $\{X_s,X_{s'}\}$ which are identical except with opposite labels on the two state circles incident to $c$.  For each such pair,  ${d}_c(X_s)={d}_c(X_{s'})$ over both $R=\F_2$ and $R=\Z$; also, $\text{sgn}(X\to X_s)=-\text{sgn}(X\to X_{s'})$ in case $R=\Z$. Conclude in both cases: 
\[
\pushQED{\qed} 
{d}\left(\text{tr}_{\Z}{X}\right)
=\sum_{ X'\in[X]_{\color{gray}A\color{black}}}\text{sgn}(X\to X'){d}_cX'
=\sum_{ \text{pairs }\{X_s,~X_{s'}\}}{\text{sgn}(X\to X_{s})}\left({d}_cX_{s}-{d}_cX_{s'}\right)=0.
 \qedhere 
\]
\end{proof}
\begin{prop}\label{P:3}
If $X$ enhances $x$ 
so that no $B$--zone contains more than one \color{ForestGreen}1\color{black}--labeled circle, then
\[\varepsilon\circ \pi_{\left[X\right]_B}\circ {d}:\Cs_R(D)\to 2R.\]
\end{prop}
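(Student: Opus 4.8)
The plan is to show that for an arbitrary crossing $c$ of $D$, the contribution $\varepsilon\circ\pi_{[X]_B}\circ d_c$ lands in $2R$; summing over crossings then gives the claim. Fix such a $c$. If $x$ has an $\color{gray}A\color{black}$--type crossing arc at $c$, then $d_c$ merges two state circles, and since the merge map on the $V$--factor sends $q\otimes q\mapsto 0$, $1\otimes q,q\otimes 1\mapsto q$, $1\otimes 1\mapsto 1$, the resulting enhanced state is $B$--equivalent to $X$ only when the merged circle label is consistent with all enhancements in $[X]_B$; in any case this produces at most one surviving term, and one checks directly it cannot produce a nonzero augmentation contribution landing outside $2R$ because—wait, more carefully: the relevant crossings are the $B$--type ones, since $\pi_{[X]_B}\circ d_c$ can only be nonzero when applying $d_c$ to an enhanced state $W$ with $W\sim_B X$, and $[X]_B$ lives in a single $(i,j)$--grading while $d_c$ raises $i$, so the nonzero terms of $\pi_{[X]_B}\circ d(W)$ arise from $W$ obtained by reversing a single $B$--smoothing, i.e. $W$ differs from some member of $[X]_B$ at one crossing which carries a $B$--arc in $x$.

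So the key reduction is: enumerate the enhanced states $W$ of states $w$ of $D$ (with $w$ obtained from $x$ by changing one $B$--smoothing to an $A$--smoothing) such that $\pi_{[X]_B}\circ d_c(W)\neq 0$, and show $\sum_W \varepsilon(\pi_{[X]_B}\circ d_c(W))\in 2R$. The map $d_c$ at such a crossing splits one state circle of $w$ into two, via $q\mapsto q\otimes q$, $1\mapsto q\otimes 1 + 1\otimes q$. For the output to lie in $[X]_B$, the two resulting circles must carry labels matching some $W'\in[X]_B$ on those two circles (and $W$ must match $W'$ elsewhere). The hypothesis that no $B$--zone of $X$ contains more than one $\color{ForestGreen}1\color{black}$--labeled circle is exactly what forces the two split circles, lying in a common $B$--zone, to be labeled so that at least one is $\color{NavyBlue}0\color{black}$: hence the only relevant comultiplication output is $1\mapsto q\otimes 1 + 1\otimes q$ (when the pre-split circle is $\color{ForestGreen}1\color{black}$) or $q\mapsto q\otimes q$ (when it is $\color{NavyBlue}0\color{black}$). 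In the first subcase, $1\mapsto q\otimes 1 + 1\otimes q$ contributes two enhanced states into $[X]_B$, each with augmentation $1$, totalling $2\in 2R$. In the second subcase, $q\mapsto q\otimes q$ would put two $\color{NavyBlue}0\color{black}$'s on circles in one $B$--zone, which is fine for the no-extra-$\color{ForestGreen}1$ condition, but then $W$ has one fewer $\color{ForestGreen}1\color{black}$ in a $B$--zone than $W'$—this cannot match $W'\in[X]_B$ in the pre-split circle unless that circle was already split, i.e. this term does not arise; alternatively, pairing it with the sign-reversed partner across the split kills it. I will organize the argument so that every relevant $W$ contributing to $\pi_{[X]_B}\circ d_c$ comes in the pair $\{q\otimes 1,\,1\otimes q\}$ on the two split circles, each with $\varepsilon=1$, forcing the local sum into $2R$; summing over $c$ preserves membership in $2R$ since $2R$ is a subgroup.

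The main obstacle will be the bookkeeping around which $W$ actually contribute: one must rule out contributions where $d_c$ of $W$ produces a term in $[X]_B$ via a merge rather than a split (this happens when $w$ has an $A$--smoothing at $c$, but then $w=x$ only if $c$ already carried an $A$--arc, contradicting homogeneity unless the whole block is $A$--type, in which case the merged output has too few circles to lie in $[X]_B$, whose members all have $|\bigcirc|_x$ circles), and to confirm that the $\color{ForestGreen}1\color{black}\mapsto q\otimes 1 + 1\otimes q$ terms genuinely both land in $[X]_B$ and are distinct. Once the enumeration is pinned down, the arithmetic ``$1+1\in 2R$'' is immediate, and the fact that $\varepsilon\circ\pi_{[X]_B}\circ d$ is a sum over crossings of maps into the subgroup $2R$ completes the proof. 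The homogeneity of $x$ (so that $B$--zones are genuine blocks and $[X]_B$ is well-behaved, via Proposition~\ref{P:1}) is used implicitly to guarantee that the circles split by a $B$--arc lie in a single $B$--zone, which is where the hypothesis on $\color{ForestGreen}1\color{black}$--labels bites.
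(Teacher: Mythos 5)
Your overall strategy is the paper's: the only enhanced states $W$ that can contribute to $\pi_{[X]_B}\circ d$ are those whose underlying state agrees with $x$ except at one crossing $c$ where $x$ has a $B$--smoothing and $w$ an $A$--smoothing; there $d_c$ splits one circle into the two circles of $x$ incident to the $B$--arc at $c$, both in one $B$--zone; the hypothesis on $[X]_B$ forces the surviving contribution to be the two--term split, whose augmentation is $1+1\in 2R$. However, as written your case analysis rests on a swapped dictionary between labels and the algebra $V$: in the paper's convention (\textsection\ref{S:enhanced}) the $1$--label (green) corresponds to $q$ and the $0$--label (blue) to $1\in V$, so the two--term split $1\mapsto q\otimes 1+1\otimes q$ occurs when the pre-split circle is $0$--labeled, and $q\mapsto q\otimes q$ (two $1$--labeled circles) occurs when it is $1$--labeled --- the opposite of what you assert. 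This matters because your dismissal of the one--term case is then invalid: there is no ``sign-reversed partner across the split'' (the comultiplication of $q$ has a single term), and the ``does not arise'' argument is not correct as stated. The correct dismissal, which your earlier sentence already points toward, is that $\sim_B$ moves preserve the number of $1$--labeled circles in each $B$--zone, so no member of $[X]_B$ has two $1$--labeled circles in the zone containing $c$; hence the $q\otimes q$ output is killed by $\pi_{[X]_B}$, and the only surviving contribution is the pair from a $0$--labeled pre-split circle, exactly as in the paper's proof.

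A second misstep is your exclusion of merge contributions. A merge of $d_c(W)$ landing on an enhancement of $x$ would have exactly $\left|\bigcirc\right|_x$ circles (it enhances $x$), so the circle-count argument does not rule it out, and homogeneity is not the relevant hypothesis. What rules it out is adequacy of $x$ (standing in this section): the $B$--arc of $x$ at $c$ joins two \emph{distinct} circles, so the state $w$ obtained by switching $c$ to an $A$--smoothing has a single incident circle there, and $d_c$ at $c$ is necessarily a split --- this is precisely the paper's remark that $y$ ``must have an $A$--smoothing with one incident state circle.'' If $x$ were not adequate at $c$, a single merge term with augmentation $\pm1$ could indeed appear and the statement would fail, so this step cannot be waved away. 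With the convention corrected and adequacy invoked at that point, your argument becomes the paper's proof.
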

\begin{proof}
Let $Y$  
be any enhanced state. If $\pi_{[X]_B}\circ {d}(Y)\neq 0$,
then the underlying state $y$ of $Y$ must differ from $x$ at precisely one crossing, $c$, at which $y$ must have an $\color{gray}A\color{black}$--smoothing with one incident state circle, which must be labeled \color{NavyBlue}0 \color{black} in $Y$ because each $X'\in[X]_B$ has at most one \color{ForestGreen}1\color{black}--labeled circle in each $B$--zone.  Thus, $\pi_{[X]_B}\circ {d}_c(Y)=X_s+X_{s'}$, where $X_s$, $X_{s'}$ are identical except with opposite labels on the two state circles of $x$ incident to $c$.  In particular, $\varepsilon\circ \pi_{[X]_B}\circ {d}(Y_t)=1+1\in2R$.
\end{proof}
\begin{prop}\label{P:4}
If $X$ enhances $x$ with at most one \color{NavyBlue}0\color{black}--labeled state circle in each $\color{gray}A\color{black}$--zone and at most one \color{ForestGreen}1\color{black}--labeled
  state circle in each $B$--zone, then $\text{tr}_{\F_2}{X}$ represents a nonzero homology class. 
Further, if every $\color{gray}A\color{black}$--zone containing a \color{NavyBlue}0\color{black}--labeled circle in $X$ is bipartite, then $\text{tr}_\Z{X}$ represents a nonzero homology class.
\end{prop}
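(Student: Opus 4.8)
The statement combines the two preceding propositions — \ref{P:2} gives closedness of the trace and \ref{P:3} gives that $\varepsilon\circ\pi_{[X]_B}\circ d$ lands in $2R$ — with the homogeneity of $x$, which by Proposition \ref{P:1} yields $[X]_A\cap[X]_B=\{X\}$. So the natural approach is simply to check that the hypotheses of Proposition \ref{P:inexact} are met and then invoke it, with the only real work being to confirm that the various side conditions in \ref{P:2} and \ref{P:inexact} are genuinely implied by the hypotheses of \ref{P:4}.

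First I would observe that, since $x$ is homogeneous, $[X]_A\cap[X]_B=\{X\}$ by Proposition \ref{P:1}; this is the ``$[X]_A\cap[X]_B=\{X\}$'' hypothesis of Proposition \ref{P:inexact}. Next, the hypothesis that each $\color{gray}A\color{black}$--zone contains at most one \color{NavyBlue}0\color{black}--labeled circle lets me apply Proposition \ref{P:2} to conclude $d(\text{tr}_{\F_2}X)=0$, so $\text{tr}_{\F_2}X$ is a cycle; and the hypothesis that each $B$--zone contains at most one \color{ForestGreen}1\color{black}--labeled circle lets me apply Proposition \ref{P:3} to get $\varepsilon\circ\pi_{[X]_B}\circ d:\Cs_R(D)\to 2R$. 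Over $R=\F_2$ we have $2R=\{0\}\subsetneqq R$, so Proposition \ref{P:inexact} applies directly and gives that $\text{tr}_{\F_2}X$ is not exact; being also closed, it represents a nonzero homology class.

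For the integral statement, I would use the extra hypothesis: if every $\color{gray}A\color{black}$--zone that carries a \color{NavyBlue}0\color{black}--labeled circle in $X$ is bipartite, then every \emph{non-bipartite} $\color{gray}A\color{black}$--zone is all--\color{ForestGreen}1\color{black}, which is exactly the condition under which $\text{tr}_\Z X$ is defined (per the definition in \textsection\ref{S:block}) and under which the second clause of Proposition \ref{P:2} gives $d(\text{tr}_\Z X)=0$. Since $2\Z\subsetneqq\Z$, Proposition \ref{P:inexact} again applies — the map $\varepsilon\circ\pi_{[X]_B}\circ d$ lands in $2\Z$ by Proposition \ref{P:3} — so $\text{tr}_\Z X$ is a non-exact cycle and hence a nonzero homology class.

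The main obstacle, such as it is, is bookkeeping rather than mathematics: I need to be careful that the ``identical except with opposite labels'' pairing used in the proof of \ref{P:2} interacts correctly with the sign function $\text{sgn}(X\to\cdot)$ in the integral case, and that the bipartite hypothesis is strong enough to make $\text{sgn}$ well-defined on all of $[X]_A$ (not just consistently defined along one chain of moves) — but this is already handled inside Proposition \ref{P:2} and in the definition of $\text{tr}_\Z$, so here I only need to quote it. There is no new estimate or construction; the proposition is essentially the conjunction of \ref{P:1}, \ref{P:2}, \ref{P:3}, and \ref{P:inexact}, and the proof will be short.
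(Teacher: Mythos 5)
Your proposal is correct and follows essentially the same route as the paper: closedness from Proposition \ref{P:2}, the $2R$ condition from Proposition \ref{P:3}, and non-exactness via $[X]_{\color{gray}A\color{black}}\cap[X]_B=\{X\}$ (Proposition \ref{P:1}), with the bipartite hypothesis guaranteeing that every non-bipartite $\color{gray}A\color{black}$--zone is all--\color{ForestGreen}1\color{black}{} so that $\text{tr}_\Z X$ and the integral case of \ref{P:2} apply. The only cosmetic difference is that the paper writes out the augmentation computation $1=\varepsilon(X)=\varepsilon\circ\pi_{[X]_B}(\text{tr}_RX)=\varepsilon\circ\pi_{[X]_B}\circ dY\in 2R$ inline rather than citing Proposition \ref{P:inexact}, which is the same argument.
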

\begin{proof}
Such $\text{tr}_{\F_2}{X}$, $\text{tr}_\Z{X}$ are cycles by Proposition \ref{P:2}. If $\text{tr}_R{X}$ were exact over $R=\F_2$ or $R=\Z$, say $\text{tr}_RX=dY$, then Propositions \ref{P:1} and \ref{P:3} would imply that 2 is a unit in $R$:
 \[\pushQED{\qed}1=\varepsilon(X)=\varepsilon\left(\sum_{X'\in{[X]}_{\color{gray}A\color{black}}\cap{[X]}_B}X'\right)=\varepsilon\circ \pi_{[X]_B}\left(\text{tr}_R{X}\right)=\varepsilon\circ \pi_{[X]_B}\circ {d} Y\in 2R.\qedhere\]
 \end{proof}
Putting all this together proves that Khovanov homology over $\F_2$ detects every homogeneously adequate state $x$ in two distinct gradings, $(i_x,j_x\pm1)$, where 
\[j_x={w_D}+i_x+\left|\bigcirc\right|_{x_B}-\left|\bigcirc\right|_{x_{\color{gray}A\color{black}}}+\#(B\text{--zones of }x)-\#(A\text{--zones of }x).\]
 \begin{figure}
\begin{center}
\includegraphics[width=6.5in]{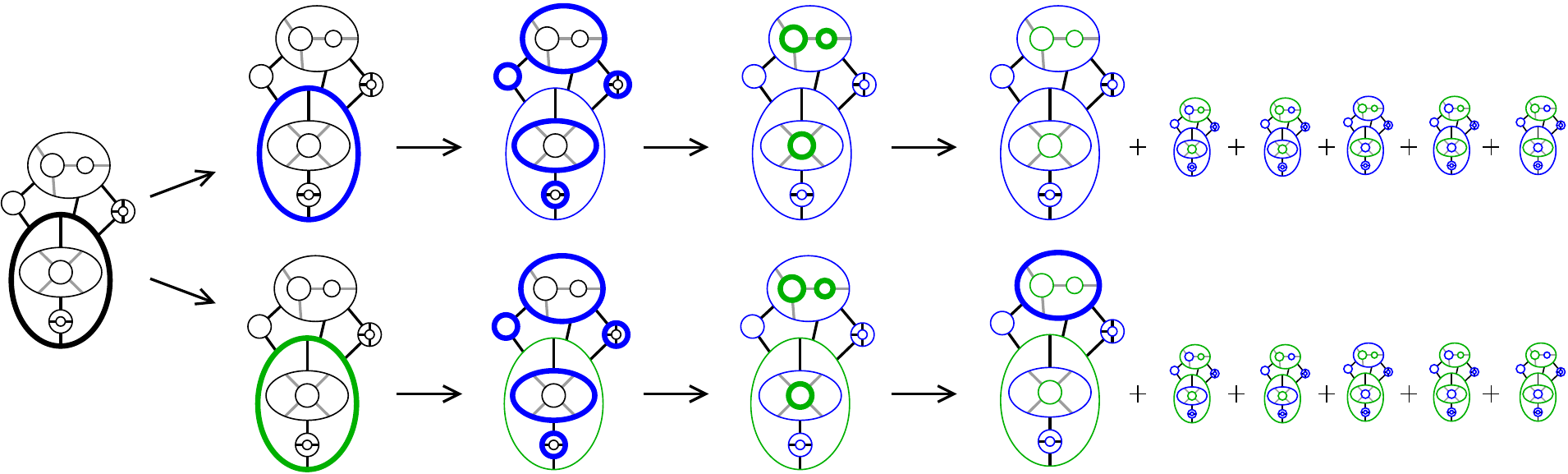}
\caption{Constructing $X^\pm$ from the state $x$, left.}
\label{Fi:Construct}
\end{center}
\end{figure}
\begin{maintheorem}
If $x$ is a homogeneously adequate state, then for any point $p$ 
away from crossing arcs, $x$ has enhancements 
$X^-\in \Cs_{R,\color{ForestGreen}p\to1\color{black}}(x)$, $X^+\in \Cs_{R,\color{NavyBlue}p\to0\color{black}}(x)$,  
identical away from $p$, such that 
both $\text{tr}_{\F_2}{X^\pm}$ represent nonzero homology classes. 
If also $G_{x_{\color{gray}A\color{black}}}$ is bipartite, then both $\text{tr}_{\Z}X^\pm$ represent nonzero homology classes.
\end{maintheorem}
\begin{proof}
Construct $X^\pm$ as follows. 
First, label the state circle containing $p$: \color{ForestGreen}1 \color{black} for $X^-$,  \color{NavyBlue}0 \color{black} for $X^+$.  
Next, for both $X^-$ and $X^+$, label all remaining state circles in the zone(s) containing $p$: \color{ForestGreen}1 \color{black} for any state circle in an $\color{gray}A\color{black}$--zone containing $p$,  \color{NavyBlue}0 \color{black} for $B$--. Next, in each zone which abuts the first one (or two, in case $p\in x_{\color{gray}A\color{black}}\cap x_B$), label all remaining circles \color{ForestGreen}1  \color{black} or \color{NavyBlue}0\color{black}, according the type ($\color{gray}A\color{black}$-- or $B$-- respectively) of the zone.  Repeat in this manner, progressing by adjacency, until every state circle of $x$ has been labeled. 

The resulting $X^\pm$ are identical away from 
$p$, with ${{j}_{X^\pm}}=j_x\pm 1$.  Since $X^\pm$ satisfy the hypotheses of Proposition \ref{P:4}, both of $\text{tr}_{\F_2}{X^\pm}$ represent nonzero homology classes.   
No additional effort is required over $R=\Z$, due to the assumption in this case that every $\color{gray}A\color{black}$--zone of $x$ is bipartite.
\end{proof}

%

\section{Remarks and questions}\label{S:final}

A state $x$ need not be essential  in order for $\Cs_{\F_2}(x)$ to be nonzero; indeed, $x$ need not even be adequate.
Consider two examples. 
First, for the trivial diagram of two components, $\bigcirc~\bigcirc$, the homology groups are 
\[\Kh_{\F_2}^{0,-2}=\F_2\cdot\color{ForestGreen}\bigcirc~\bigcirc\color{black},\qquad
\Kh_{\F_2}^{0,0}=\F_2\cdot\color{ForestGreen}\bigcirc~\color{NavyBlue}\bigcirc\color{black}\oplus~\F_2\cdot\color{NavyBlue}\bigcirc~\color{ForestGreen}\bigcirc\color{black} ,\qquad\Kh_{\F_2}^{0,2}=\F_2\cdot\color{NavyBlue}\bigcirc~\color{NavyBlue}\bigcirc\color{black}.\]
Now perform a Reidemeister--2 (R2) move to get the connected diagram $\raisebox{-.025in}
{\includegraphics[height=.15in]{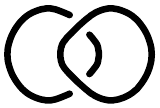}}$.  Each of the four homology generators can still be taken to be the $\color{gray}A\color{black}$-trace of an enhancement of a single state, namely $\raisebox{-.025in}{\includegraphics[height=.15in]{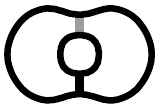}}$ or $\raisebox{-.025in}{\includegraphics[height=.15in]{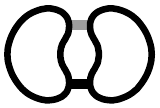}}$. 
Yet, these states are not essential, since their state surfaces are connected and span a split link.  

Second, consider the enhancement $X=\raisebox{-.03in}
{\includegraphics[height=.2in]{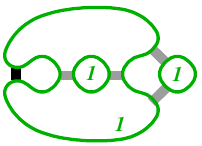}}$ of the state $x=\raisebox{-.03in}{\includegraphics[height=.2in]{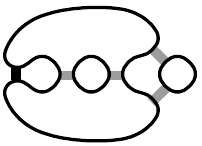}}$ of the diagram $D=\raisebox{-.03in}
{\includegraphics[height=.2in]{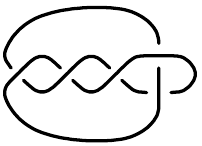}}$ (cf Figure \ref{Fi:NonzeroInessential}); $X$ is a cycle with any coefficients. 
Moreover, $X$ is not exact unless 2 is a unit in $R$, as $\varepsilon\circ \pi_\Bs\circ {d}\equiv 0$ over $\F_2$, where $\Bs$ is an $\F_2$--basis for $\Cs_{\F_2}^{i_X,j_X}(D)$ (cf Figure \ref{Fi:NonzeroInessential}).

\begin{figure}
\begin{center}
$X=\raisebox{-.1in}{\includegraphics[width=.35in]{Fig8X1.pdf}}$
\hspace{.05in}
$D=\raisebox{-.1in}{\includegraphics[width=.35in]{Fig8Link.pdf}}$
\hspace{.05in}
$\Bs=\Bigg\{
\raisebox{-.1in}{\includegraphics[width=.35in]{Fig8X1.pdf},~
\includegraphics[width=.35in]{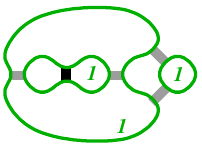},~
\includegraphics[width=.35in]{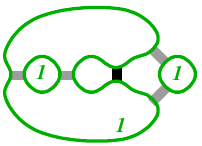},~
\includegraphics[width=.35in]{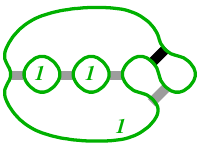},~
\includegraphics[width=.35in]{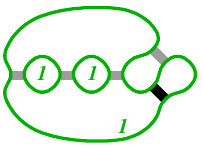}}
\Bigg\}$
\hspace{.05in}
$\Bs'=\Bigg\{
\raisebox{-.1in}{\includegraphics[width=.35in]{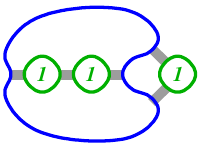},~
\includegraphics[width=.35in]{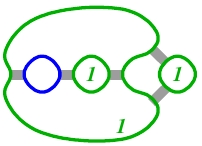},~
\includegraphics[width=.35in]{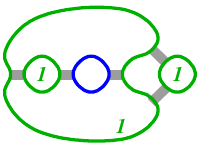},~
\includegraphics[width=.35in]{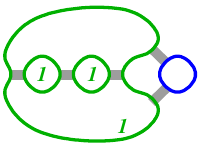}}
\Bigg\}$
\caption{An enhancement $X$  
of an \textit{inessential} 
state of a link diagram $D$; bases $\Bs$ for $\Cs_{\F_2}^{i_X,j_X}(D)$ and $\Bs'$ for $\Cs_{\F_2}^{i_X-1,j_X}(D)$.  
Check that ${d} X=0$ and  $\varepsilon\circ \pi_{\Bs}\circ {d}\equiv 0$ over $\F_2$.  }
\label{Fi:NonzeroInessential}
\end{center}
\end{figure}





Here is an idea for extending the main theorem: establish a class of essential states which are nonzero in two distinct $j$--gradings in Khovanov homology, say over $\F_2$---for simplicity, insist that the initial class must consist only of checkerboard states---and then aim to extend by plumbing. 
 The easiest such class of checkerboard states consists of those which are {\it alternating}; plumbing these gives the adequate homogeneous states. 
To construct a new (non-alternating) essential checkerboard state, consider any (non-alternating) link diagram $D$ which admits no $n\to 0$ wave moves. This means that, whenever $\alpha\subset D$ is a smooth arc whose endpoints are away from crossings and whose interior contains $n$ overpasses and no underpasses, or vice-versa, every arc $\beta\subset S^2$ with the same endpoints as $\alpha$  intersects $K$ in its interior.
Construct either checkerboard surface $F$ for $D$, and replace each of its half-twist crossing bands with a full-twist band in the same sense. (Any band with at least two half-twists in the same sense suffices.)

 The resulting link diagram $D'$ has twice as many crossings as $D$, and the resulting surface $F'$ is an essential, two--sided checkerboard surface with the same euler characteristic as $F$. 
 (To prove essentiality, use Menasco's crossing ball structures, hypothesize a compression disk $\Delta$ for $F'$ which intersects the crossing ball structure minimally, characterize the outermost disks of $\Delta\setminus (S^2\cup C)$, and observe that the only viable configuration for a height one component of $\Delta\cap (S^2\cup C)$ implies that $D$ admitted an $n\to 0$ wave move, contrary to assumption.) 
Does Khovanov homology detect the essential checkerboard states from this construction? 

The simplest non-alternating diagram admitting no $n\to0$ wave moves is a  4--crossing diagram of the trefoil. Following Figure \ref{Fi:DoubledStates}, construct an 8--crossing diagram from this one in the manner just described, and consider $X=\raisebox{-.05in}{\includegraphics[height=.2in]{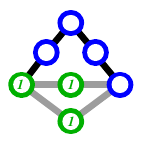}}$, with  $\text{tr}_RX= \raisebox{-.05in}{\includegraphics[height=.2in]{X123.pdf}}-\raisebox{-.05in}{\includegraphics[height=.2in]{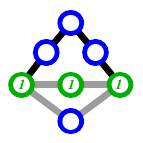}}-\raisebox{-.05in}{\includegraphics[height=.2in]{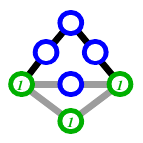}}+\raisebox{-.05in}{\includegraphics[height=.2in]{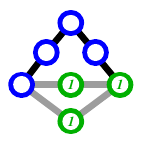}}$, and $Y=\raisebox{-.05in}{\includegraphics[height=.2in]{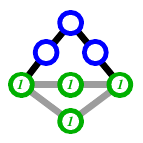}}$.
\begin{figure}
\begin{center}
\scalebox{1.3}{\raisebox{-.15in}{\includegraphics[width=2in]{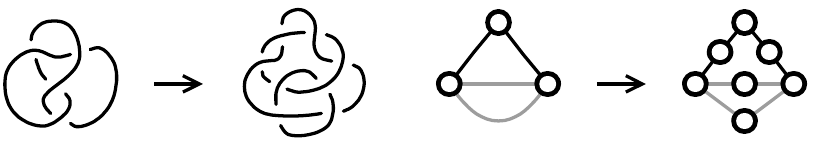}}\hspace{.25in}$\raisebox{-.15in}{\includegraphics[width=.4in]{X123.pdf}}-\raisebox{-.15in}{\includegraphics[width=.4in]{X124.pdf}}-\raisebox{-.15in}{\includegraphics[width=.4in]{X134.pdf}}+\raisebox{-.15in}{\includegraphics[width=.4in]{X234.pdf}}$},\hspace{.25in}\scalebox{1.3}{\raisebox{-.15in}{\includegraphics[width=.4in]{X1234.pdf}}}
\caption{Left: an essential state built by doubling crossings.  Right: Are these cycles exact?}\label{Fi:DoubledStates}
\end{center}
\end{figure}
Both are cycles over $R=\F_2$ and $R=\Z$, but exactness is not so easy. When considering the exactness of a cycle $\text{tr}_RZ$ from an enhancement $Z$ of an adequate homogeneous state, it sufficed to consider $\pi_{[Z]_B}\circ d$. In the case of $X$ from above, 
considering the map $\pi_{\Cs_\Z(x)}\circ d$ proves that $\text{tr}_\Z X$ 
is not a cycle; yet this computation proves nothing regarding $\text{tr}_{\F_2}X$ or $Y$. 
We leave it as an open question whether this construction yields states $x$ with $\Cs_{\F_2}(x)\cap \ker(d)\not\subset \text{image}(d)$, even in this simplest example.  We also ask:

\begin{question}\label{Q:essentialA}
If $x$ is an essential state, does $\Cs_{\F_2}(x)$ always contain a nonzero homology class?
\end{question}
\begin{question}\label{Q:distinguish}
Is there a general method for distinguishing those Khovanov homology classes that correspond to essential states from those that do not? 
\end{question}
%
 %
\begin{question}
Does every link have a diagram with an essential state? A homogeneously adequate state?
\end{question}

\end{document}